\documentclass[11pt]{article}

\usepackage{graphicx, subfigure}
\usepackage[top=1.05in,bottom=1.05in,left=1.05in,right=1.05in]{geometry}
\usepackage[numbers, square]{natbib}

\usepackage{amsmath,amsfonts,amscd,amssymb,bm,epsfig,epsf,bbm,constants,amssymb}
\usepackage[usenames]{color}

\usepackage[ruled, vlined, lined, commentsnumbered]{algorithm2e}

%%%%%%%%%%%%%%%%%%%%%%%%%%%%%%%%%%
%%%%%%%%%%%%%%%%%%%%%%%%%%%%%%%%%%

% colors
\definecolor{darkred}{RGB}{100,0,0}
\definecolor{darkgreen}{RGB}{0,100,0}
\definecolor{darkblue}{RGB}{0,0,150}
\definecolor{red}{RGB}{255,0,0}

% links
\usepackage{hyperref}
\hypersetup{colorlinks=true, linkcolor=darkred, citecolor=darkgreen, urlcolor=darkblue}
\usepackage{url}

\usepackage{prettyref}
\newrefformat{eq}{(\ref{#1})}
\newrefformat{chap}{Chapter~\ref{#1}}
\newrefformat{sec}{Section~\ref{#1}}
\newrefformat{algo}{Algorithm~\ref{#1}}
\newrefformat{fig}{Fig.~\ref{#1}}
\newrefformat{tab}{Table~\ref{#1}}
\newrefformat{rmk}{Remark~\ref{#1}}
\newrefformat{clm}{Claim~\ref{#1}}
\newrefformat{def}{Definition~\ref{#1}}
\newrefformat{cor}{Corollary~\ref{#1}}
\newrefformat{lmm}{Lemma~\ref{#1}}
\newrefformat{lemma}{Lemma~\ref{#1}}
\newrefformat{prop}{Proposition~\ref{#1}}
\newrefformat{app}{Appendix~\ref{#1}}
\newrefformat{ex}{Example~\ref{#1}}
\newrefformat{exer}{Exercise~\ref{#1}}
\newrefformat{soln}{Solution~\ref{#1}}

\newcommand{\pth}[1]{\left( #1 \right)}

\newcommand{\sth}[1]{\left\{ #1 \right\}}
\newcommand{\wt}{\widetilde}
\newcommand{\wh}{\widehat}
\newcommand{\eps}{\epsilon}

%--------------
\newtheorem{theorem}{Theorem}[section]
\newtheorem{lemma}[theorem]{Lemma}

%%%%%%
%--------------
% EJC's macros

\newcommand{\Var}{\textrm{Var}}

\renewcommand{\P}{\operatorname{\mathbb{P}}}
\newcommand{\E}{\operatorname{\mathbb{E}}}
\newcommand{\norm}[1]{{\left\lVert{#1}\right\rVert}}

\newcommand{\indep}{\rotatebox[origin=c]{90}{$\models$}}

% Specific commands

% Linear algebra macros
\newcommand{\vct}[1]{\bm{#1}}
\newcommand{\mtx}[1]{\bm{#1}}

\newcommand{\supp}[1]{\operatorname{supp}(#1)}

\numberwithin{equation}{section}

% anotation
\definecolor{eac}{RGB}{200,50,50}
\definecolor{XL}{RGB}{200,50,200}

\numberwithin{equation}{section}
\parskip = 0 pt

\def \endprf{\hfill {\vrule height6pt width6pt depth0pt}\medskip}
\newenvironment{proof}{\noindent {\bf Proof} }{\endprf\par}

%%%%%%%%%%%%%%%%%%%%%%%%%%%%%%%%%%
%%%%%%%%%%%%%%%%%%%%%%%%%%%%%%%%%%

\begin{document}

% \title{Optimal Estimation Rate for Sparse Phase Retrieval under Sub-exponential Noise via Iterative Thresholding} 
\title{Optimal Rates of Convergence for Noisy Sparse Phase Retrieval via Thresholded Wirtinger Flow} 
\author{T.~Tony Cai, Xiaodong Li, and Zongming Ma
\\
~
\\
University of Pennsylvania}
\maketitle

\begin{abstract}
This paper considers the noisy sparse phase retrieval problem: recovering a sparse signal $\vct{x} \in \mathbb{R}^p$ from noisy quadratic measurements $y_j = (\vct{a}_j' \vct{x} )^2 + \epsilon_j$, $j=1, \ldots, m$, with independent sub-exponential noise $\epsilon_j$. The goals are to understand the effect of  the sparsity of $\vct{x}$ on the estimation precision and to construct a computationally feasible estimator to achieve the optimal rates. Inspired by the \emph{Wirtinger Flow} \cite{Candes14} proposed for noiseless and non-sparse phase retrieval, a novel thresholded gradient descent algorithm is proposed and it is shown to adaptively achieve the minimax optimal rates of convergence over a wide range of sparsity levels when the $\vct{a}_j$'s are independent standard Gaussian random vectors, provided that the sample size is sufficiently large compared to the sparsity of $\vct{x}$.
\\
\\
\textbf{Keywords}: High-dimensional $M$-estimation; Iterative thresholding; Minimax rate; Non-convex empirical risk; Phase retrieval; Sparse recovery; Thresholded gradient method.
\end{abstract}

%%%%%%%%%%%%%%%%%%%%%%%%%%%%%%%%%%%
\section{Introduction}
%%%%%%%%%%%%%%%%%%%%%%%%%%%%%%%%%%%

In a range of fields in science and engineering, researchers face the problem of recovering a $p$-dimensional signal of interest $\vct{x}$ by probing the signal via a set of $p$-dimensional sensing vectors $\vct{a}_j$, $j=1,\dots,m$, and hence the observations are the $(\vct{a}_j'\vct{x})$'s contaminated with noise.
This gives rise to the linear regression model in statistical terminology where $\vct{x}$ is the regression coefficient vector and $\mtx{A} = [\vct{a}_1,\dots,\vct{a}_m]'$ is the design matrix.
There is an extensive literature on the theory and methods for the estimation/recovery of $\vct{x}$ under such a linear model. However, in many important applications, including X-ray crystallography, microscopy, astronomy, diffraction and array imaging, interferometry, and quantum information, it is sometimes impossible to observe $\vct{a}_j'\vct{x}$ directly and the measurements that one is able to obtain are the magnitude/energy of $\vct{a}_j' \vct{x}$ contaminated with noise.
In other words, the observations are generated by the following \emph{phase retrieval} model:
\begin{align}
\label{eq:measurement}
y_j = |\vct{a}_j' \vct{x} |^2 + \epsilon_j, \qquad j=1,\dots,m,
\end{align}
where $\vct{\epsilon}=(\epsilon_1, \ldots, \epsilon_m)'$ is a vector of stochastic noise with $\E \vct{\epsilon}= \vct{0}$. Note that $\E(y_j)=|\vct{a}_j' \vct{x} |^2$, so in the real case, \eqref{eq:measurement} can be treated as a generalized linear model with the multi-value link function $g(z):=\pm \sqrt{z}$. We refer interested readers to \cite{shechtman2014phase} and the reference therein for more detailed discussions on scientific and engineering background for this model. 

% In this paper we study the following \emph{phase retrieval} problem.
% % , which is usually referred to as  or \emph{generalized phase retrieval} in the literature: 
% Let $\vct{a}_j$, $j=1,\dots,m$, be $p$-dimensional vectors, and we want to recover a fixed signal $\vct{x} \in \R^p$ from $m$ noisy observations
% \begin{align}
% \label{eq:measurement}
% y_j = (\vct{a}_j' \vct{x} )^2 + \epsilon_j, \qquad j=1,\dots,m,
% \end{align}
% in which $\vct{\epsilon}=(\epsilon_1, \ldots, \epsilon_m)'$ is a vector of stochastic noise. 
% % For simplicity, the model \eqref{eq:measurement} can also be written concisely as $\vct{y}=|\mtx{A}\vct{x}|^2 + \vct{\epsilon}$, where $\mtx{A}$ consists of the rows $\vct{a}_j'$, $j=1, \ldots, m$.\\
% 
% The phase retrieval problem arises in various engineering fields, such as X-ray crystallography, microscopy, astronomy, diffraction and array imaging, optics, acoustics, blind channel estimation in wireless communications, interferometry, quantum mechanics, and quantum information. 
% See, for instance, \cite{shechtman2014phase} and the reference therein.

In many applications, especially those related to imaging, 
the signal $\vct{x} \in \mathbb{R}^p$ admits a sparse representation under some known and deterministic linear transformation. 
Without loss of generality, we assume in the rest of the paper that such a linear transform has already taken place and hence the signal $\vct{x}$ is sparse itself. 
In this case, model  \eqref{eq:measurement} is referred to as the \emph{sparse phase retrieval} model. 
In addition, we consider the case where $\vct{\epsilon}$ are independent centered sub-exponential random errors. 
This is motivated by the observation that in the application settings where model \eqref{eq:measurement} is appropriate, especially in optics, heavy-tailed noise may arise due to photon counting.
% This is more realistic than sub-Gaussian noise which is usually assumed in the literature of noisy sparse recovery or high-dimensional regression, since in many applications of phase retrieval, the noise is usually due to sensor/electronic error, and modeled as Poisson variables.\\

Efficient computational methods for phase retrieval have been proposed in the community of optics, and they are mostly based on the seminal work by Gerchberg, Saxton, and Fienup \cite{GS1972, Fienup1982}. The effectiveness of these methods relies on careful exploration of prior information of the signal in the spatial domain. Moreover, these methods were revealed later as non-convex successive projection algorithms \cite{LS1984, BCL2002}. This provides insight for occasional observation of stagnation of iterates and failure of convergence. 

Recently, inspired by multiple illumination, novel computational methods were proposed for phase retrieval without exploring and employing a priori information of the signal. These methods include semidefinite programming \cite{chai2011array, candes2013phase, candes2012phaselift, waldspurger2012phase, PRCDP}, polarization \cite{alexeev2012phase}, alternating minimization \cite{netrapalli2013phase}, gradient methods \cite{Candes14}, alternating projection \cite{MTW2014}, etc. More importantly, profound and remarkable theoretical guarantees for these methods have also been established. As for noiseless sparse phase retrieval, semidefinite programming has been proven to be effective with theoretical guarantees \cite{li2012sparse, OJFEH2015, jaganathan2012robust}. Other empirical methods for sparse phase retrieval include belief propagation \cite{SR2012} and greedy methods \cite{SBE2014}.

Regarding noisy phase retrieval, some stability results have been established in the literature; See \cite{candes2012solving, Soltanolkotabi2014, CC15}. 
In particular, stability results have been established in \cite{CCG2014} for noisy sparse phase retrieval by semidefinite programming, though the authors did not study the optimal dependence of the convergence rates on the sparsity of the signal and the sample size.
%However, it is not clear how these convergence rates improve if the signal is sparse. 
Nearly minimax convergence rates for sparse phase retrieval with Gaussian noise have been established in \cite{LM2013} under sub-gaussian design matrices. However, the optimal rates are achieved by empirical risk minimization under sparsity constraints, in which both the objective function and the constraint are non-convex, implying that the procedure is not computationally feasible.

In the present paper, we establish the minimax optimal rates of convergence for noisy sparse phase retrieval under sub-exponential noise, and propose a novel thresholded gradient descent method in order to estimate the signal $\vct{x}$ under the model \eqref{eq:measurement}. 
For conciseness, we focus on the case where the signal and the sensing vectors are all real-valued, and the key ideas extend naturally to the complex case.
The theoretical analysis sheds light on the effects of the sparsity of the signal $\vct{x}$  and the presence of sub-exponential noise on the minimax rates for the estimation of $\vct{x}$ under the $\ell_2$ loss, as long as the sensing vectors $\vct{a}_j$'s are independent standard Gaussian vectors. Combining the minimax upper and lower bounds given in Section 3, the optimal rate of convergence for estimating the signal $\vct{x}$ under the $\ell_2$ loss is ${\sigma\over \|\vct{x}\|_2}\sqrt{k\log p\over m}$, where $k$ is the sparsity of $\vct{x}$, $\|\cdot\|_2$ is the usual Euclidean norm, and $\sigma$ characterizes the noise level. Moreover,  it is shown that the thresholded gradient descent procedure is both rate-optimal and computationally efficient, and the sample size requirement matches the state-of-the-art result in computational sparse phase retrieval under structureless Gaussian design matrices.

%  The computational complexity of the algorithm is as low as linear \nb{linear in $p$? or $mp$? or $mk$? need clarification}
% In this paper, we are interested in understanding the denoising effect the sparse structure of the signal in terms of sub-exponential noise. In other words, if the rows of $\mtx{A}$ are spread and incoherent, what is the minimax estimation error rate for sparse phase retrieval? Moreover, does there exist a computationally feasible minimax estimator for $\vct{x}$ against the sub-exponential noise?
% In order to answer these two questions, 
%The proposed procedure has the following salient features under the assumption that the sensing vectors $\vct{a}_j$'s are i.i.d.~standard normal random vectors: 
%First, optimal rates of convergence is established for independent centered sub-exponential noise; 
%Second, the computational complexity of the algorithm is as low as linear \nb{linear in $p$? or $mp$? or $mk$? need clarification}; 
%Third, the sample size requirement matches the state-of-the-art result in computational sparse phase retrieval.
%
We explain some notation used throughout the paper. For any $n$-dimensional vector $\vct{v}=(v_1, \ldots, v_n)'$ and a subset $S \subset \{1, \ldots, n\}$, we denote by $\vct{v}_S$ the $n$-dimensional vector by keeping the coordinates of $\vct{v}$ with indices in $S$ unchanged, while changing all other components to zero. We also denote $\|\vct{v}\|_q:=(v_1^q+\ldots+v_n^q)^{1/q}$ for $q \geq 1$, and $\|\vct{v}\|_\infty = \max_{1 \leq k \leq n}|v_k|$. Also denote $\|\vct{v}\|_0$ as the number of nonzero components of $\vct{v}$. For any matrix $\mtx{M} \in \mathbb{R}^{n_1 \times n_2}$, and any subsets $S_1 \in \{1, \ldots, n_1\}$ and $S_2 \in \{1, \ldots, n_2\}$, $\mtx{M}_{S_1 S_2} \in \mathbb{R}^{n_1 \times n_2}$ is defined by keeping the submatrix of $\mtx{M}$ with row index set $S_1$ and column index set $S_2$, while changing all other entries to zero. For any $q_1 \geq 1$ and $q_2 \geq 1$, we denote $\|\mtx{M}\|_{q_2 \rightarrow q_1}$ the induced norm from the Banach space $(\mathbb{R}^{n_2}, \|\cdot\|_{q_2})$ to $(\mathbb{R}^{n_1}, \|\cdot\|_{q_1})$. For simplicity, denote $\|\mtx{M}\|:= \|\mtx{M}\|_{2 \rightarrow 2}$. We also denote by $\mtx{I}_n$ the $n \times n$ identity matrix.

The rest of the paper is organized as follows: In Section \ref{sec:method}, we introduce in detail the thresholded gradient descent procedure, which consists of two steps. The first is an initialization step by applying a diagonal thresholding method to a matrix constructed with available data. The second step applies iterative thresholding procedure for the recovery of the sparse vector $\vct{x}$. Section \ref{sec:theory} establishes the minimax optimal rates of convergence for noisy sparse phase retrieval under the $\ell_2$ loss. The results show that the proposed thresholded gradient descent method is rate-optimal. In Section \ref{sec:simulation}, numerical simulations illustrate the effectiveness of thresholding in denoising, and demonstrate how the relative estimation error depends on the thresholding parameter $\beta$, sample size $m$, sparsity $k$, and the noise-to-signal ratio $\sigma/\|\vct{x}\|_2^2$.  In Section \ref{sec:discussion}, we discuss the connections between our thresholded gradient method for noisy sparse phase retrieval and related methods proposed in the literature for high-dimensional regression. The proofs are given in Section \ref{sec:proof} with some technical details deferred to the appendix.

%This general model arises from the well known computational phase retrieval problem, \XL{references about phase retrieval}. In particular, we are interested in the case where the signal is sparse; That is $\|\vct{x}\|_0 \ll p$. Inspired by compressed sensing, we are interested in how to devise theoretically sound nonlinear procedure to accurately estimate $\vct{x}$ from the noisy quadratic measurements $\vct{y}$. This model is referred to as \emph{sparse phase retrieval} throughout the paper.

%\newpage
%%%%%%%%%%%%%%%%%%%%%%%%%%%%%%%%%%%
\section{Methodology}
\label{sec:method}
%%%%%%%%%%%%%%%%%%%%%%%%%%%%%%%%%%%

The major component of the our method is a thresholded gradient descent algorithm to obtain a sparse solution to a given non-convex empirical risk minimization problem. Due to the non-convex nature of the problem, in order to avoid any local optimum that is far away from the truth, the initialization step is crucial. Thus, we also provide a candidate method which can be justified theoretically for yielding a good initializer. 
% For brevity of discussion, we assume $\vct{x}$ is a real signal and $\mtx{A}$ is a real matrix throughout the paper.
The methodology is proposed assuming that $\mtx{A}$ has standard Gaussian entries, though it could potentially also be used when such an assumption does not necessarily hold.
% Our method consists of two steps: initialization plus iterative thresholding. The initialization aims to provide a primitive solution close to the original signal $\vct{x}$ or $-\vct{x}$. The iterative algorithm aims to update the estimates over time until the statistical error of the estimation attains the optimal rate of convergence.

\subsection{Thresholded Wirtinger flow}

Given the sensing vectors $\vct{a}_j$ and the noisy magnitude measurements $y_j$ as in \eqref{eq:measurement} for $j=1,\dots, m$,
one can consider estimating $\vct{x}$ by minimizing the following empirical risk function
\begin{equation}
\label{eq:erf}
f(\vct{z}):= \frac{1}{4m} \sum_{j=1}^m \left(|\vct{a}_j' \vct{z}|^2 - y_j\right)^2.
\end{equation}

Statistically speaking, in the low-dimensional setup with fixed $p$ and $m \rightarrow \infty$, if the additive noises are heavy-tailed, least-absolute-deviations (LAD) methods might be more robust than least-squares methods. However, recent progress in modern linear regression analysis shows that least-squares could be preferable to LAD when $p$ and $m$ are proportional, even the noises are sub-exponential \cite{EBBLY2013}. Due to this surprising phenomenon, we simply take the least-squares empirical risk in \eqref{eq:erf}, although phase retrieval is a nonlinear regression problem, which could be very different from linear regression.  More importantly, close-form gradient methods can be induced from the empirical risk function in \eqref{eq:erf}, which is computationally convenient. To be specific, at any current value of $\vct{z}$, one updates the estimator by taking a step along the gradient direction
\begin{equation}
\label{eq:gradient}
\nabla f(\vct{z}) = \frac{1}{m} \sum_{j=1}^m \left(|\vct{a}_j'\vct{z}|^2 - y_j\right)(\vct{a}_j' \vct{z})\vct{a}_j
\end{equation}
until a stationary point is reached. Indeed, \citet{Candes14} showed that under appropriate conditions, initialized by an appropriate spectral method, a gradient method, referred to as Wirtinger flow, leads to accurate recovery of $\vct{x}$ up to a global phase in the complex domain and noiseless setting.

However, the direct application of gradient descent is not ideal for noisy sparse phase retrieval since it does not utilize the knowledge that the true signal $\vct{x}$ is sparse in order to mitigate the contamination of the noise. To incorporate this a priori knowledge, it makes sense to seek a ``sparse minimizer'' of \eqref{eq:erf}.
To this end, suppose we have a sparse initial guess $\vct{x}^{(0)}$ for $\vct{x}$.
To update $\vct{x}^{(0)}$ to another sparse vector, we may take a step along $\nabla f(\vct{x}^{(0)})$, and then sparsify the result by thresholding.

\begin{algorithm}[ht] 
% \SetAlgoLined
\KwIn{Data $\sth{\vct{a_j}, y_j}_{j=1}^m$; initial estimator $\wh{\vct{x}}_0$; thresholding function $\mathcal{T}$; gradient tuning parameter $\mu$; thresholding tuning parameter $\beta$; number of iterations $T$.}
% \smallskip

\KwOut{Final estimator $\wh{\vct{x}}$.}
% \smallskip

\nl Initialize $n \leftarrow 0$ and $\wh{\vct{x}}^{(0)} = \wh{\vct{x}}_0$.

%\nb{a more concrete convergence criteria?}

\Repeat{$n=T$}{
\nl Compute threshold level
\begin{equation}
\label{eq:threshold}
\tau(\widehat{\vct{x}}^{(n)})=
% \frac{\sqrt{\beta \log n}}{m} 
\sqrt{ \frac{\beta \log (mp)}{m^2} \sum_{j=1}^m \left(|\vct{a}_j'\widehat{\vct{x}}^{(n)}|^2 - y_j\right)^2 |\vct{a}_j'\widehat{\vct{x}}^{(n)}|^2}\, ;
\end{equation}
% \begin{equation}
% \label{eq:threshold}
% \tau(\vct{z})=\frac{\sqrt{\alpha \log n}}{m} \sqrt{\sum_{j=1}^m \left(|\vct{a}_j'\vct{z}|^2 - y_j\right)^2 |\vct{a}_j'\vct{z}|^2}.
% \end{equation}

\nl Update 
\begin{equation}
\label{eq:iteration}
\widehat{\vct{x}}^{(n+1)} = \varphi(\widehat{\vct{x}}^{(n)}):= \mathcal{T}_{\frac{\mu}{\phi^2}\tau(\widehat{\vct{x}}^{(n)})}  \left(\widehat{\vct{x}}^{(n)} - \frac{\mu}{\phi^2} \nabla f (\widehat{\vct{x}}^{(n)})\right),
\end{equation}}

where $\nabla f$ is defined in \eqref{eq:gradient};

\nl Return $\wh{\vct{x}} = \wh{\vct{x}}^{(T)}$.
% \end{align}
% where $\widehat{\vct{v}}_1$ as the leading eigenvector of $\mtx{W}_{\wh{S}_0}$.
\caption{Thresholded Wirtinger flow for noisy sparse phase retrieval\label{algo:twf}}
\end{algorithm}

Indeed, if we were given the oracle knowledge of the support $S$ of $\vct{x}$, then we can reduce the problem to recovering $\vct{x}_S$ based on the $\{y_j, a_{jS}\}_{j=1}^m$. 
By avoiding estimating any coordinate of $\vct{x}$ in $S^c$, we could greatly reduce variance of the resulting estimator of $\vct{x}$.
In reality, we do not have such oracle knowledge and the additional thresholding step added on top of gradient descent is intended to mimic the oracle behavior by hopefully restricting all the updated coordinates on $S$.
% \nb{insert more explanation by restricting everything on the support $S$ of $\vct{x}$}

Let $\mathcal{T}_\tau$ be any thresholding function satisfying 
\begin{align}
	\label{eq:thr}
\mathcal{T}_\tau(x) = 0,~~\forall x \in [-\tau, \tau], \quad\mbox{and}\quad
|\mathcal{T}_\tau(x) - x|\leq \tau, ~~\forall x\in \mathbb{R}.
% \nb{condition on the thresholding function}	
\end{align}
For any vector $\vct{b} = (b_1,\dots, b_p)'$, let $\mathcal{T}_\tau(\vct{b}) = (\mathcal{T}_\tau(b_1),\dots, \mathcal{T}_\tau(b_p))'$.
With the foregoing definition, the proposed thresholded gradient descent method can be summarized as \prettyref{algo:twf}.
In view of the Wirtinger flow method for noiseless phase retrieval \cite{Candes14}, we name our approach the ``Thresholded Wirtinger Flow'' method.
The data-driven choice of the threshold level in \eqref{eq:threshold} is motivated by the following intuition.
Recall that we assume the sensing vectors $\{\vct{a}_j:j=1, \ldots, m\}$ are independent standard Gaussian vectors. For a fixed $\vct{z}$, if we act as if each $(|\vct{a}_j'\vct{z}|^2 - y_j)(\vct{a}_j'\vct{z})$ is a fixed constant, then the gradient in \eqref{eq:gradient} is a linear combination of Gaussian vectors and hence has i.i.d.~Gaussian entries with mean zero and variance $\frac{1}{m^2}\sum_{j=1}^m (|\vct{a}_j'\vct{z}|^2 - y_j)^2(\vct{a}_j'\vct{z})^2$.
Therefore, the threshold $\tau(\vct{z})$ is simply $\sqrt{\beta \log(mp)}$ times the standard deviation of these Gaussian random variables, which is essentially the universal thresholding in the Gaussian sequence model literature \cite{johnstone11}.
Although the above intuition is not exactly true, the resulting thresholds in \eqref{eq:threshold} are indeed the right choices as justified later in \prettyref{sec:theory}, and illustrated in \prettyref{sec:simulation}.
Notice that there are two tuning parameters $\mu$ and $\beta$, which should be treated as absolute constants. We will validate some theoretical choices and also provide practical choices later.

% In addition, we need a thresholding function $\mathcal{T}$ to state the algorithm. 
% \nb{two conditions on the thresholding function. double check the case of complex domain.}

% Suppose $\mathcal{T}_\tau : \mathbb{R}^p \rightarrow \mathbb{R}^p$ is the soft-thresholding operator with threshold $\tau$. The iterative method is defined as
% where

% \nb{insert comments on its connection to and difference from the approaches in \cite{Candes14} and \cite{Ma13}}
% ~\\

\subsection{Initialization}

\begin{algorithm}[ht]
% \SetAlgoLined
\KwIn{Data $\sth{\vct{a_j}, y_j}_{j=1}^m$; tuning parameter $\alpha$.}
% \smallskip

\KwOut{Initial estimator $\wh{\vct{x}}_0$.}
% \smallskip

\nl Compute
\begin{equation}
\label{eq:phi}
\phi^2=\frac{1}{m} \sum_{j=1}^m y_j,
\end{equation}
and 
\begin{equation}
\label{eq:testing_seq}
I_l=\frac{1}{m} \sum_{j=1}^m y_j a_{jl}^2,~~ l=1, \ldots, p.
\end{equation}

\nl Select a set of coordinates 
\begin{equation}
\label{eq:initial_support}
\wh{S}_0 =\left\{ l \in [p]: I_l > \left(1+  \alpha \sqrt{\frac{\log (mp)}{m}}\right)\phi^2\right\}.
\end{equation}

\nl Compute a $p\times p$ matrix
\begin{equation}
	\label{eq:W-0}
\mtx{W}_{\wh{S}_0\wh{S}_0}:=\frac{1}{m} \sum_{j=1}^m y_j\vct{a}_{j \widehat{S}_0} \vct{a}_{j \widehat{S}_0}'.
\end{equation}

\nl Return 
\begin{align}
	\label{eq:x-init}
\wh{\vct{x}}_0 = \phi\, \wh{\vct{v}}_1
\end{align}
where $\widehat{\vct{v}}_1$ as the leading eigenvector of $\mtx{W}_{\wh{S}_0\wh{S}_0}$.
\caption{Initialization for \prettyref{algo:twf}\label{algo:init}}
\end{algorithm}

It is worth noting that the success of \prettyref{algo:twf} depends crucially on the initial estimator for two reasons.
First, the empirical risk \eqref{eq:erf} is a non-convex function of $\vct{z}$ and hence it could have multiple local minimizers. Hence the success of a gradient descent based approach depends naturally on the starting point.
Moreover, an accurate initializer can reduce the required number of iterations in the thresholded Wirtinger flow algorithm. In view of its crucial rule, we propose in \prettyref{algo:init} an initialization method which can be proven to yield a decent starting point for \prettyref{algo:twf} under our modeling assumption.
%For any $p$-dimensional vector $\vct{b}$ and any set $S\subset [p]$, we use $\vct{b}_{S}$ to denote the $p$-vector obtained by replacing all coordinates of $\vct{b}$ in $S^c$ with zeros. 
% We propose \prettyref{algo:init} for initializing the thresholded Wirtinger flow method. 

The motivation of the algorithm is similar to that of diagonal thresholding \cite{JohnstoneLu09} for sparse PCA: we want to identify a small collection of coordinates with big marginal signals and then compute an estimator of $\vct{x}$ by focusing only on these coordinates.
In particular, the quantity $I_l$ in \eqref{eq:testing_seq} captures the marginal signal strength of the $l$-th coordinate and $\wh{S}_0$ \eqref{eq:initial_support} selects all coordinates with big marginal signals.
Last but not least, \eqref{eq:W-0} and \eqref{eq:x-init} computes the initial estimator by focusing only on the coordinates in $\wh{S}_0$.
There is a tuning parameter $\alpha$ needed as input of the algorithm, which can be treated as an absolute constant. 
We will provide some justified theoretical choice later. 

%\nb{=== below is old ===}

% Define 
% \begin{equation}
% \label{eq:phi}
% \phi^2=\frac{1}{m} \sum_{j=1}^m y_j
% \end{equation}
% and
% \begin{equation}
% \label{eq:testing_seq}
% I_l=\frac{1}{m} \sum_{j=1}^m y_j a_{jl}^2,~~ l=1, \ldots, p.
% \end{equation}

% Notice here $\vct{v}_{\mathcal{T}_0}$ means replacing the coordinates in $\mathcal{T}_0^c$ with zeros.
%%\[
%%\frac{\|\widehat{\vct{x}}^{(t)} - \vct{x}\|_2}{\|\vct{x}\|_2} \leq \frac{1}{6}\left(1 - \frac{\mu}{16}\right)^t + C_0\frac{\sigma}{\|\vct{x}\|_2^2}\sqrt{\frac{k \log p}{m}}.
%%\]
%
%\XL{We'd better give an explicit $\gamma$.}

%%%%%%%%%%%%%%%%%%%%%%%%%%%%%%%%%%%
\section{Theory}
\label{sec:theory}
%%%%%%%%%%%%%%%%%%%%%%%%%%%%%%%%%%%

We first establish the statistical convergence rate for the thresholded Wirtinger flow method under the case of ``Gaussian design", i.e., $\vct{a}_j \stackrel{iid}{\sim} \mathcal{N}(\vct{0}, \mtx{I}_p)$ for $j= 1, \ldots, m$ in \eqref{eq:measurement}.  
Moreover, we assume the signal $\vct{x}$ is $k$-sparse, i.e., $\|\vct{x}\|_0=k$, and the noises $\epsilon_1, \ldots, \epsilon_m$ are $m$ independent centered sub-exponential random variables with maximum sub-exponential norm $\sigma$, i.e., $\sigma := \max_{1 \leq i \leq m} \| \epsilon_i \|_{\psi_1}$. Here for any random variable $X$, its sub-exponential norm is defined as $\|X \|_{\psi_1}:=\sup_{p \geq 1} p^{-1} (\E |X|^p)^{\frac{1}{p}}$. This definition, as well as some fundamental properties of sub-exponential variables (such as Bernstein inequality), can be found in Section 5.2.4 of \cite{vershyninNARMT}.

\begin{theorem}
\label{thm:upper}
Suppose $\beta = 4$ in \eqref{eq:threshold}, and $\alpha=K\left(1+\frac{\sigma}{\|\vct{x}\|_2^2}\right)$ in \eqref{eq:initial_support} for some absolute constant $K$. Suppose $\mu \leq \mu_0$ in \eqref{eq:iteration} and $m\geq C \left(1+\frac{\sigma^2}{\|\vct{x}\|_2^4}\right)k^2 \log (mp)$. For all $t=1, 2, 3, \ldots$, there holds
\[
\sup_{\|\vct{x}\|_0 = k} \P_{(\mtx{A}, \vct{y}|\vct{x})} \left({\min\limits_{i = 0, 1} \|\widehat{\vct{x}}^{(t)} - (-1)^i\vct{x}\|_2} > \frac{1}{6}\left(1 - \frac{\mu}{16}\right)^t {\|\vct{x}\|_2} + C_0\frac{\sigma}{\|\vct{x}\|_2}\sqrt{\frac{k \log p}{m}}\right)\leq \frac{46}{m} + \frac{10}{e^{k}} + \frac{t}{mp^2}\]
where $\mu_0$, $C$, and $C_0$ are some absolute constants.
\end{theorem}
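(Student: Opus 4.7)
The strategy is the standard two-phase analysis for nonconvex iterative algorithms: first check that \prettyref{algo:init} places $\wh{\vct{x}}_0$ in a deterministic $\ell_2$ neighborhood of $\pm\vct{x}$ of radius, say, $\|\vct{x}\|_2/12$, and then establish a one-step contraction for the thresholded gradient map $\varphi$ and induct over $n$. By the $\vct{z}\mapsto-\vct{z}$ symmetry of $f$, at each step we may assume the nearer signed version of $\vct{x}$ is $+\vct{x}$ and work with the error $\vct{h}^{(n)} = \wh{\vct{x}}^{(n)} - \vct{x}$; the aim is to establish
\begin{equation*}
\|\vct{h}^{(n+1)}\|_2 \;\le\; \pth{1 - \tfrac{\mu}{16}}\|\vct{h}^{(n)}\|_2 \;+\; C\,\frac{\sigma}{\|\vct{x}\|_2}\sqrt{\frac{k\log p}{m}}
\end{equation*}
on a single high-probability event, then iterate. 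A crucial structural point, verified inductively alongside the contraction, is that each $\wh{\vct{x}}^{(n)}$ remains supported on $S = \supp{\vct{x}}$ (or at least on an $O(k)$-sized subset), so that all concentration bounds below need only hold uniformly over a single sparse cone rather than over all of $\R^p$.

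For the one-step contraction, I would decompose $\nabla f(\vct{z})$ into three pieces: its population mean under the Gaussian design, which a Gaussian moment computation (Stein's lemma) evaluates to $(3\|\vct{z}\|_2^2 - \|\vct{x}\|_2^2)\vct{z} - 2\langle\vct{x},\vct{z}\rangle\vct{x}$; a centered ``design noise'' piece from the empirical average; and an ``additive noise'' piece $-m^{-1}\sum_j \epsilon_j(\vct{a}_j'\vct{z})\vct{a}_j$. The population piece vanishes at $\vct{z}=\pm\vct{x}$ and linearises at $\vct{z}=\vct{x}$ to the Hessian $(2\|\vct{x}\|_2^2\mtx{I} + 4\vct{x}\vct{x}^\transp)\vct{h}$, whose spectrum along the relevant directions supplies the local strong convexity and smoothness that, combined with $\phi^2\approx\|\vct{x}\|_2^2$ and a sufficiently small step size $\mu\le\mu_0$, yields the $(1-\mu/16)$ contraction factor after collecting the lower-order corrections. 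The two stochastic pieces I would bound in $\ell_\infty$ by sparse Bernstein / sub-exponential concentration at the scale $\sqrt{\log(mp)/m}\,(\|\vct{z}\|_2^3 + \sigma\|\vct{z}\|_2)$. The key point is that the data-driven threshold $\tau(\wh{\vct{x}}^{(n)})$ in \eqref{eq:threshold} is calibrated precisely to dominate this combined $\ell_\infty$ fluctuation of $(\mu/\phi^2)\nabla f$, with $\beta=4$ playing the role of the universal-thresholding constant.

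The thresholded update is then converted into a quantitative bound by the elementary lemma: if $\vct{b}^\ast$ is $k$-sparse on $S$ and $\|\vct{b}-\vct{b}^\ast\|_\infty\le\tau$, then the first property in \eqref{eq:thr} forces $\mathcal{T}_\tau(\vct{b})$ to vanish on $S^c$, while the second gives $|\mathcal{T}_\tau(b_i) - b^\ast_i| \le \tau + |b_i-b^\ast_i|$ on $S$, so $\|\mathcal{T}_\tau(\vct{b})-\vct{b}^\ast\|_2^2 \le 2k\tau^2 + 2\|\vct{b}-\vct{b}^\ast\|_2^2$. Applied with $\vct{b} = \wh{\vct{x}}^{(n)} - (\mu/\phi^2)\nabla f(\wh{\vct{x}}^{(n)})$ and $\vct{b}^\ast = \wh{\vct{x}}^{(n)} - (\mu/\phi^2)\E\nabla f(\wh{\vct{x}}^{(n)})$, this simultaneously (i) forces $\supp{\wh{\vct{x}}^{(n+1)}}\subseteq S$, propagating the sparsity hypothesis, and (ii) converts the population piece into the advertised contraction of $\vct{h}^{(n)}$, while the $\sqrt{k}\,\tau$ term delivers the noise floor $C_0(\sigma/\|\vct{x}\|_2)\sqrt{k\log p/m}$ once the sample-size assumption $m \geq C(1+\sigma^2/\|\vct{x}\|_2^4)k^2\log(mp)$ absorbs the design-noise contribution to $\tau$. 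For the base case, \prettyref{algo:init} is analysed in the spirit of diagonal thresholding for sparse PCA: moment identities give $\E\phi^2 = \|\vct{x}\|_2^2$ and $\E I_l = 2x_l^2 + \|\vct{x}\|_2^2$, with sub-exponential deviations of order $(\|\vct{x}\|_2^2+\sigma)\sqrt{\log(mp)/m}$; the choice of $\alpha$ keeps $\wh{S}_0$ of size $O(k)$ while capturing the significant coordinates of $\vct{x}$, after which a matrix Bernstein inequality on the $k\times k$ block of $\mtx{W}_{\wh{S}_0\wh{S}_0}$, combined with Davis--Kahan for $\wh{\vct{v}}_1$ and the scaling $\phi\approx\|\vct{x}\|_2$, gives $\min_{i=0,1}\|\wh{\vct{x}}_0 - (-1)^i\vct{x}\|_2 \le \|\vct{x}\|_2/12$ under the stated sample size.

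I expect the main obstacle to be obtaining the $\ell_\infty$ concentration of $\nabla f(\vct{z})$ at the precise scale required to justify $\tau(\wh{\vct{x}}^{(n)})$: the summands in \eqref{eq:gradient} are cubic in the Gaussian $\vct{a}_j$'s (and the additive-noise piece further carries a sub-exponential $\epsilon_j$), so the individual terms have only sub-Weibull tails of index $2/3$, and the concentration must hold uniformly over the entire sparse neighborhood visited by the iterates \emph{and} plug into a data-dependent threshold. This is the reason the failure probability contains the additional $t/(mp^2)$ contribution: the threshold-validation events must be re-taken at each iteration, and I would handle this by a union bound over the $t$ steps on top of a single $\epsilon$-net / covering argument over $(Ck)$-sparse directions, which accounts for the remaining $46/m + 10/e^{k}$ portion of the probability budget.
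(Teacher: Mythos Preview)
Your two-phase strategy (initialization into a local ball, then one-step contraction and induction) matches the paper, as does your gradient decomposition into population mean, design fluctuation, and additive noise; these correspond to the $T_1$, $T_2$, $\tau(\vct{z})$ split in Lemma~\ref{thm:contraction}, and your initialization sketch is essentially Lemma~\ref{thm:initialization}.

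The gap is in the threshold-validation step on $S^c$, and it is precisely the point you flag as ``the main obstacle.'' You propose to control $\max_{i\in S^c}|\partial_i f(\vct{z})|$ either by a per-step argument at $\hat{\vct{x}}^{(n)}$ or by a single $\epsilon$-net over the $S$-supported ball. Neither works as stated. The per-step conditional-Gaussian argument is illegitimate applied to $\hat{\vct{x}}^{(n)}$ directly: even when $\hat{\vct{x}}^{(n)}$ is supported on $S$, it is \emph{not} independent of $\mtx{A}_{S^c}$, because the thresholding at earlier steps already inspected those columns; so you may not treat $\sum_j c_j(\hat{\vct{x}}^{(n)})\,a_{ji}$ as Gaussian conditional on the coefficients. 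The $\epsilon$-net alternative fails with $\beta=4$: the net over the $k$-dimensional ball has cardinality $(C/\epsilon)^k$, and the union bound $p\,(C/\epsilon)^k/(mp)^2$ is not small under the stated sample-size condition, so either $\beta$ must grow with $k$ (destroying the noise floor) or a much more delicate Lipschitz interpolation is needed that you do not supply.

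The device you are missing, and which drives the paper's proof, is an \emph{oracle} sequence run in parallel:
\[
\vct{x}^{(n+1)}=\eta(\vct{x}^{(n)}):=\mathcal{T}_{(\mu/\phi^2)\tau(\vct{x}^{(n)})}\Bigl(\vct{x}^{(n)}-\tfrac{\mu}{\phi^2}\nabla f(\vct{x}^{(n)})_S\Bigr),
\]
which uses only the $S$-restricted gradient. By construction each $\vct{x}^{(n)}$ is supported on $S$ \emph{and} independent of $\mtx{A}_{S^c}$ (Lemma~\ref{lmm:independence}); conditionally on $(\mtx{A}_S,\vct{\epsilon})$ the $i$-th coordinate of $\nabla f(\vct{x}^{(n)})$ for $i\in S^c$ is then exactly Gaussian with variance $\tau(\vct{x}^{(n)})^2/(\beta\log(mp))$, so $\beta=4$ gives failure $\le (mp)^{-2}$ per coordinate with no net, and on the resulting event $\hat{\vct{x}}^{(n+1)}=\vct{x}^{(n+1)}$ (Lemma~\ref{lmm:induction}). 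The contraction itself is proved once, uniformly over $S$-supported $\vct{z}$ in the $\tfrac16\|\vct{x}\|_2$-ball, for the oracle map $\eta$ (Lemma~\ref{thm:contraction}); the uniformity there is obtained not by a net but by operator-norm bounds on $\sum_j\epsilon_j\vct{a}_{jS}\vct{a}_{jS}'$ and the moment bounds $\|\mtx{A}_S\|_{2\to4}$, $\|\mtx{A}_S\|_{2\to6}$ of Lemma~\ref{lmm:universal_upperbound}. This independence-preserving oracle construction is what makes the per-step argument with $\beta=4$ go through and is the missing ingredient in your plan.
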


The proof is given in Section \ref{sec:proof}. Lemma \ref{thm:initialization} guarantees the efficacy of the initialization step \prettyref{algo:init}, and Lemmas \ref{thm:contraction} and \ref{lmm:induction} explain why the thresholded Wirtinger flow method leads to accurate estimation. Here $\beta = 4$ and $\alpha=K\left(1+\frac{\sigma}{\|\vct{x}\|_2^2}\right)$ are chosen for analytical convenience. The discussion of empirical choices of $\beta$, $\alpha$, and $\mu$ are deferred to Section \ref{sec:simulation}.

Let us interpret \prettyref{thm:upper} by considering the following cases. In the noiseless case, with high probability, we obtain $\min\limits_{i = 0, 1} \|\widehat{\vct{x}}^{(t)} - (-1)^i\vct{x}\|_2 \leq \frac{1}{6}\left(1 - \frac{\mu}{16}\right)^t \|\vct{x}\|_2$. This implies that thresholded gradient descent method leads to linear convergence to the original signal up to a global sign. 

In the noisy case, if $\mu>0$ is an absolute constant, by letting $t \asymp \log \left(1/\delta \right)$ where $\delta = \frac{\sigma}{\|\vct{x}\|_2^2}\sqrt{\frac{k \log p}{m}}$, we obtain $\min\limits_{i = 0, 1} \|\widehat{\vct{x}}^{(t)} - (-1)^i\vct{x}\|_2 \precsim\frac{\sigma}{\|\vct{x}\|_2}\sqrt{\frac{k \log p}{m}}$ with high probability. If the knowledge of $\delta$ is not available, by choosing $t=O(\log p)$, we can obtain $\min\limits_{i = 0, 1} \|\widehat{\vct{x}}^{(t)} - (-1)^i\vct{x}\|_2 \precsim\frac{\sigma}{\|\vct{x}\|_2}\sqrt{\frac{k \log p}{m}}+\frac{1}{p^c}$ for any predetermined $c>0$. The convergence rate $\frac{\sigma}{\|\vct{x}\|_2}\sqrt{\frac{k \log p}{m}}$ is better than the upper bound result established in \cite{LM2013}, which is achieved by the intractable sparsity constrained empirical risk minimization. Our contribution is to show that this rate can be obtained tractably by a fast algorithm.

Ignoring any polylog factor, the above convenient properties of thresholded Wirtinger flow are guaranteed by the sample size condition $m\gtrsim k^2$. When $m \ll p$, this condition is crucial for the effectiveness of initialization \prettyref{algo:init}. An immediate question is whether such a minimum sample size condition is in some sense necessary for any computationally efficient algorithm, if the sensing matrix is random and structureless? A similar phenomenon has been previously observed in the related but different problem of sparse principal component analysis. Assuming the hardness of the planted clique problem \cite{Alon98}, a series of papers \cite{Berthet13,Wang14,Gao14} have shown that a comparable minimum sample size condition is necessary for any estimator computable in polynomial time complexity to achieve consistency and optimal convergence rates uniformly over a parameter space of interest. In particular, it was shown in \cite{Gao14} that this is the case even for the most restrictive parameter space in sparse principal component analysis -- (discretized) Gaussian single spiked model with a sparse leading eigenvector. Establishing comparable computational lower bounds for sparse phase retrieval, especially under the Gaussian design, is an interesting project for future research.
 
 In the case when $m \gtrsim p$ ignoring any log factor, it is well-known that a consistent initializer can be obtained by spectral methods \cite{netrapalli2013phase, Candes14}, no matter whether $\vct{x}$ is sparse or not. In other words, the diagonal thresholding idea in \prettyref{algo:init} is not as crucial as in the case $m \ll p$. It is interesting to investigate whether $m\gtrsim k^2$ can be relaxed such that the optimal converge rates can still be achieved by thresholded Wirtinger flow.\\

The convergence rate $\frac{\sigma}{\|\vct{x}\|_2}\sqrt{\frac{k \log p}{m}}$ is essentially optimal. The following lower bound result has been essentially proven in \cite{LM2013}:

\begin{theorem} (\cite{LM2013})
	\label{thm:lower}
Let $\Theta(k,p,R) = \{\vct{x}\in \mathbb{R}^p: \|\vct{x} \|_2 =  R, \|\vct{x}\|_0 = k \}$. Suppose the $\vct{a}_j$'s are i.i.d.~$\mathcal{N}(0, \mtx{I}_p)$, the $\epsilon_j$'s are i.i.d.~$\mathcal{N}(0,\sigma^2)$, and they are mutually independent. There holds under model \eqref{eq:measurement}, 
\begin{align*}
\inf_{\wh{\vct{x}}} \sup_{\vct{x}\in \Theta(k,p,R)} \mathbb{P}_{(\vct{A}, \vct{y}|\vct{x})} \left(
% \min(
\min_{i=0,1}\|\wh{\vct{x}} - (-1)^i \vct{x} \|_2
% , \|\wh{\vct{x}} + \vct{x} \|) 
\geq C_0 \frac{\sigma}{R} \sqrt{\frac{k\log(ep/k)}{m}}\right) \geq \frac{1}{5},
\end{align*}
provided $m \geq C\left(\frac{\sigma^2}{\|\vct{x}\|_2^4}+1\right) k\log (ep/k)$, where both $C$ and $C_0$ are some absolute constants.
\end{theorem}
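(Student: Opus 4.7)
The plan is to establish the minimax lower bound through the standard Fano method, reducing estimation over $\Theta(k,p,R)$ to a multiple-hypothesis testing problem. The three ingredients needed are a rich packing of $\Theta(k,p,R)$ in the identifiable metric $d(\vct{x},\vct{x}') := \min_{i=0,1} \|\vct{x}-(-1)^i\vct{x}'\|_2$, a tight upper bound on the pairwise Kullback--Leibler divergence between the observation laws $P_{(\mtx{A},\vct{y}\mid\vct{x})}$, and a calibration between the packing radius and the KL budget.

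First, I would construct a local packing $\{\vct{x}_1,\dots,\vct{x}_N\} \subset \Theta(k,p,R)$ via the Gilbert--Varshamov lemma applied to a family of $k$-sparse $\pm 1$-patterns scaled onto the sphere of radius $R$. Aiming for the target radius $\delta \asymp \frac{\sigma}{R}\sqrt{k\log(ep/k)/m}$, I would ensure that (i) $\log N \gtrsim k\log(ep/k)$, (ii) $d(\vct{x}_i,\vct{x}_j) \geq 2\delta$ for every $i\neq j$, and (iii) $\|\vct{x}_i-\vct{x}_j\|_2 \leq c_1\delta$ for an absolute constant $c_1$. The sample-size hypothesis $m \gtrsim (1+\sigma^2/\|\vct{x}\|_2^4)\,k\log(ep/k)$ guarantees $\delta \leq R$, so the rescaled packing actually fits inside the sphere.

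Second, since $y_l \mid \vct{a}_l, \vct{x} \sim \mathcal{N}((\vct{a}_l'\vct{x})^2,\sigma^2)$ with the design $\vct{a}_l$ shared across hypotheses, a direct computation gives
\[
D(P_{(\mtx{A},\vct{y}\mid\vct{x}_i)}\,\|\,P_{(\mtx{A},\vct{y}\mid\vct{x}_j)}) = \frac{1}{2\sigma^2}\sum_{l=1}^m \E_{\vct{a}}\bigl[\bigl((\vct{a}'\vct{x}_i)^2 - (\vct{a}'\vct{x}_j)^2\bigr)^2\bigr].
\]
Factoring $(\vct{a}'\vct{x}_i)^2-(\vct{a}'\vct{x}_j)^2 = \bigl(\vct{a}'(\vct{x}_i-\vct{x}_j)\bigr)\bigl(\vct{a}'(\vct{x}_i+\vct{x}_j)\bigr)$ and noting that $\vct{a}'(\vct{x}_i-\vct{x}_j)$ and $\vct{a}'(\vct{x}_i+\vct{x}_j)$ are jointly Gaussian with covariance $\|\vct{x}_i\|_2^2 - \|\vct{x}_j\|_2^2 = 0$, and hence independent, produces
\[
\E\bigl[\bigl((\vct{a}'\vct{x}_i)^2 - (\vct{a}'\vct{x}_j)^2\bigr)^2\bigr] = \|\vct{x}_i-\vct{x}_j\|_2^2 \cdot \|\vct{x}_i+\vct{x}_j\|_2^2 \leq 4R^2\,\|\vct{x}_i-\vct{x}_j\|_2^2.
\]
Combining these yields $D \leq 2mR^2\|\vct{x}_i-\vct{x}_j\|_2^2/\sigma^2 \leq 2mR^2 c_1^2\delta^2/\sigma^2$, and plugging in $\delta$ makes this at most a small constant multiple of $\log N$. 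The standard Fano inequality then drives the error probability for any testing procedure on the packing above $1/5$ once the constant in $\delta$ is taken sufficiently small, and this in turn forces any estimator $\wh{\vct{x}}$ to misplace the truth by at least $\delta$ in the metric $d$ with probability at least $1/5$, which is the stated conclusion.

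The main obstacle is reconciling the identifiability ambiguity $\vct{x}\leftrightarrow -\vct{x}$ with the packing construction: a generic $k$-sparse sphere packing may contain near-antipodal pairs that collapse under the metric $d$. The remedy is to run Gilbert--Varshamov on a sign-restricted family---for instance by freezing the sign of one designated coordinate in each codeword, or by identifying each codeword with its negation before counting---which still delivers $\log N \gtrsim k\log(ep/k)$ while keeping all $\min(\|\vct{x}_i-\vct{x}_j\|_2,\|\vct{x}_i+\vct{x}_j\|_2) \geq 2\delta$. Once this combinatorial step is handled, the Gaussian KL calculation and the Fano step are routine, and one also needs to check that the sample-size hypothesis is exactly what keeps $\delta \leq R$ so the packing lies inside $\Theta(k,p,R)$.
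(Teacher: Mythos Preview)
Your Fano-based argument is sound and would constitute a valid self-contained proof. Note, however, that the paper does not supply its own proof of this lower bound: the theorem is stated with attribution to \cite{LM2013} and no argument is given in the paper. So there is no ``paper's approach'' to compare against; your sketch stands on its own.

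Two remarks on the sketch itself. First, the KL computation is clean and correct: the independence of $\vct{a}'(\vct{x}_i-\vct{x}_j)$ and $\vct{a}'(\vct{x}_i+\vct{x}_j)$ under the equal-norm constraint $\|\vct{x}_i\|_2=\|\vct{x}_j\|_2=R$ is exactly the right observation, and it yields the bound $D \lesssim mR^2\delta^2/\sigma^2$ directly. Second, the packing construction deserves one more line of care. To reach $\log N \gtrsim k\log(ep/k)$ you must let the \emph{supports} vary, not merely the signs on a fixed support; a packing of the $(k-1)$-sphere at scale $\delta$ alone does not produce enough points. The usual fix is a two-block construction: fix $k/2$ ``large'' coordinates at a common value (to anchor the norm near $R$) and let the remaining $k/2$ ``small'' coordinates, each of magnitude $\asymp \delta/\sqrt{k}$, sit on a $(k/2)$-subset of $\{k/2+1,\dots,p\}$ drawn from a Gilbert--Varshamov family. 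This gives the required entropy, keeps every pairwise distance within constant factors of $\delta$, and places each $\vct{x}_i$ exactly in $\Theta(k,p,R)$. With that construction in hand the sign-ambiguity issue you raise is automatically moot, since all packing points are mutually close rather than near-antipodal.
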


Notice that for a standard Gaussian variable with variance $\sigma^2$, its sub-exponential norm is a constant multiple of $\sigma$. 
For brevity, we do not scale the Gaussian noises such that their sub-exponential norms are strictly less than or equal to $\sigma$.

%%%%%%%%%%%%%%%%%%%%%%%%%%%%%%%%%%%
\section{Numerical Simulation}
\label{sec:simulation}
%%%%%%%%%%%%%%%%%%%%%%%%%%%%%%%%%%%

In this section, we report numerical simulation results to demonstrate how the relative estimation error depends on the thresholding parameter $\beta$, the noise-to-signal ratio (NSR) $\sigma/\|\vct{x}\|_2^2$, the sample size $m$, and the sparsity $k$. 
To guarantee fair comparison, we always fix the length of the signal $p=1000$ and the initialization parameter $\alpha=0.1$ (except for the first case on thresholding effect). Moreover, in each numerical experiment, we conservatively choose gradient parameter $\mu=0.01$, and the number of iterations $T=1000$ for thresholded Wirtinger flow. The resulting estimator is denoted as $\widehat{\vct{x}}=\widehat{\vct{x}}^{(1000)}$. With each fixed $k$, the support of $\vct{x}$ is uniformly distributed at random. The nonzero entries of $\vct{x}$ are i.i.d. $\sim \mathcal{N}(0, 1)$. The noise $\epsilon \sim \mathcal{N}(\vct{0}, \sigma^2 \mtx{I}_m)$, where $\sigma$ is determined by $\|\vct{x}\|_2$ and the choice of NSR $\sigma/\|\vct{x}\|_2^2$. As discussed before, the design matrix $\mtx{A}$ consists of independent standard Gaussian random variables.

\begin{enumerate}
\item

Thresholding effect: Fix $\alpha=0.1$, $m=7000$, $k=100$, and $\sigma/\|\vct{x}\|_2^2=1$. For each $\beta = 0, 0.25, 0.5, \ldots, 3$, we implement the algorithm for $10$ times with independently generated $\mtx{A}$, $\vct{x}$, and $\vct{\epsilon}$. and then take the average of the $10$ independent relative errors $\min(\|\widehat{\vct{x}} - \vct{x}\|_2, \|\widehat{\vct{x}} + \vct{x}\|_2)/\|\vct{x}\|_2$. The relation between the average relative error and the choice of $\beta$ is plotted as the red curve in Figure \ref{fig:Thresholding effect}. The result shows that the average relative error essentially decreases from $0.2365$ to $0.1151$ as the thresholding parameter increases from $0$ to $0.75$, and then increases slowly up to $0.1684$ as $\beta$ continues to increase to $3$.

We implement the above experiments again with the only difference $\alpha=0.5$. The relation curve between the relative estimation error and $\beta$ is plotted as the blue curve in \prettyref{fig:Thresholding effect}. It is clear that the performance of the algorithm is very close to the case $\alpha=0.1$.

\begin{figure}[ht]
\centering
 \includegraphics[width=80mm]{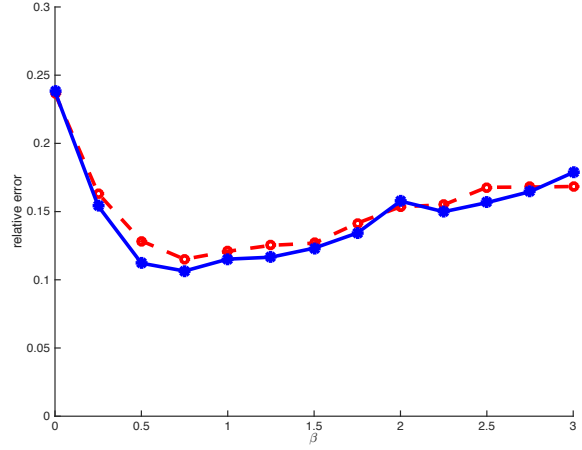}
\caption{The relation between the average relative error and the thresholding parameter $\beta$. Setup of parameters: $p=1000$, $m=1000$, $k=100$, $\sigma/\|\vct{x}\|_2^2=1$, $\mu=0.01$, and $T=1000$. Red curve with $\alpha=0.1$, while blue curve with $\alpha=0.5$.}
\label{fig:Thresholding effect}
\end{figure}

\item

Noise effect: Fix $m=7000$, $k=100$, and $\beta =1$. In each choice of NSR $\sigma/\|\vct{x}\|_2^2 = 0, 0.1, \ldots, 1$, with $5$ instances of $(\mtx{A}, \vct{x}, \vct{\epsilon})$ generated independently, we take the average of the relative error $\min(\|\widehat{\vct{x}} - \vct{x}\|_2, \|\widehat{\vct{x}} + \vct{x}\|_2)/\|\vct{x}\|_2$. In Figure \ref{fig:Noise effect}, it shows how the average relative error depends on NSR. The average relative error strictly increases from $0.0000$ to $0.1219$, as the NSR increases from $0$ to $1$.

\begin{figure}[h!]
\centering
\includegraphics[width=80mm]{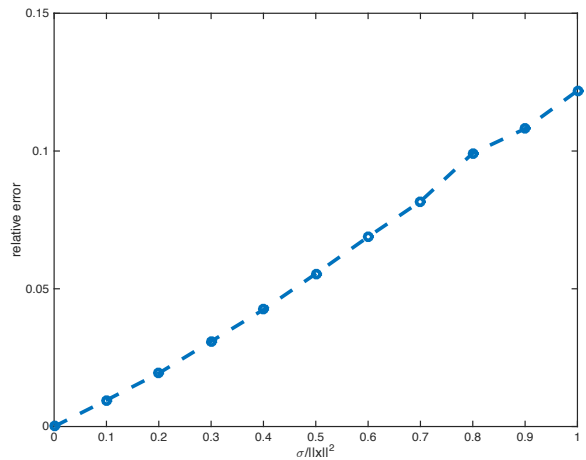}
\caption{The relation between the average relative error and the noise-to-signal-ratio $\sigma/\|\vct{x}\|_2^2$. Setup of parameters: $p=1000$, $m=1000$, $k=100$, $\beta=1$, $\alpha=0.1$, $\mu=0.01$, and $T=1000$.}
\label{fig:Noise effect}
\end{figure}

\item 

Sample size effect: Fix $k=100$, $\sigma/\|\vct{x}\|_2^2=1$, and $\beta =1$. In each choice of $m=2000, 3000, \ldots, 11000$, with $5$ instances of $(\mtx{A}, \vct{x}, \vct{\epsilon})$ generated independently, we take the average of the relative error $\min(\|\widehat{\vct{x}} - \vct{x}\|_2, \|\widehat{\vct{x}} + \vct{x}\|_2)/\|\vct{x}\|_2$. In Figure \ref{fig:Sample size effect}, it shows how the average relative error depends on the sample size. When the sample sizes are $2000$ and $3000$, i.e., twice and three times as large as $p$, the average relative errors are $0.8444$ and $0.3651$ respectively. In these cases, the thresholded gradient descent method leads to poor recovery of the original signal. When the sample size increases from $4000$ to $11000$, the average relative error decreases steadily from $0.1692$ to $0.0956$.

\begin{figure}[h!]
\centering
\includegraphics[width=80mm]{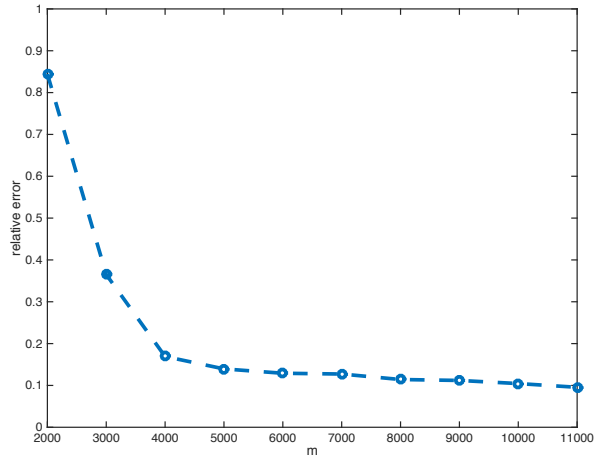}
\caption{The relation between the average relative error and the sample size $m$. Setup of parameters: $p=1000$, $\sigma/\|\vct{x}\|_2^2=1$, $k=100$, $\beta=1$, $\alpha=0.1$, $\mu=0.01$, and $T=1000$.}
\label{fig:Sample size effect}
\end{figure}

\item

Sparsity effect: Fix $m=7000$, $\sigma/\|\vct{x}\|_2^2=1$, and $\beta =1$. In each choice of sparsity $k = 25, 50, \ldots, 200$, with $10$ instances of $(\mtx{A}, \vct{x}, \vct{\epsilon})$ generated independently, we take the average of the relative error $\min(\|\widehat{\vct{x}} - \vct{x}\|_2, \|\widehat{\vct{x}} + \vct{x}\|_2)/\|\vct{x}\|_2$. Figure \ref{fig:Sparsity effect} demonstrates the relation between the average relative error and the sparsity. The average relative error essentially increases from $0.1059$ to $0.1666$, as the sparsity increases from $25$ to $200$.

\begin{figure}[h!]
\centering
\includegraphics[width=80mm]{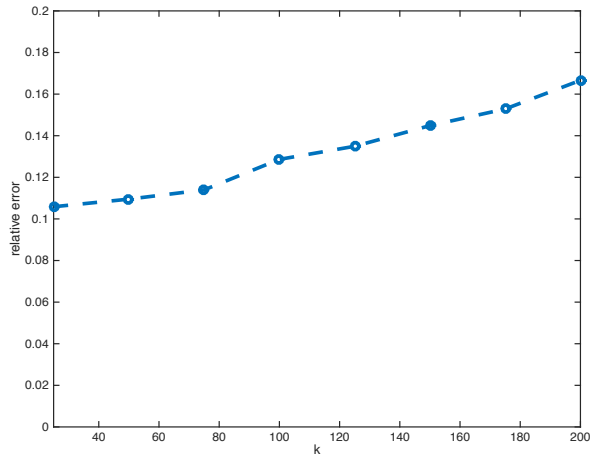}
\caption{The relation between the average relative error and the sparsity $k$. Setup of parameters: $p=1000$, $\sigma/\|\vct{x}\|_2^2=1$, $m=7000$, $\beta=1$, $\alpha=0.1$, $\mu=0.01$, and $T=1000$.}
\label{fig:Sparsity effect}
\end{figure}

%We want to emphasize that a similar lower bound result has been established in Corollary 6.2 in \cite{LM2013}. To make our paper as self-contained as possible, we provide a much more elementary proof for this result in Section \ref{sec:proofs}.
%As discussed in \cite{LM2013}, if we denote $\Theta(k,p,R) = \{\vct{x}\in \mathbb{R}^p: \|\vct{x} \| =  R, |\mathrm{supp}(\vct{x})| = k \}$ for some constant $R>0$, and assume the $\epsilon_j$'s are i.i.d. centered normal variables with variance $\sigma^2$, 
%when $m \geq C_1 \sigma^2 k\log (ep/k)/\|\vct{x}\|_2^4$,
%\begin{align*}
%\inf_{\wh{\vct{x}}} \sup_{\vct{x}\in \Theta(k,p,R)} \mathbb{E} \min(\|\wh{\vct{x}} - \vct{x} \|_2^2, \|\wh{\vct{x}} + \vct{x} \|_2^2) \geq C_2 \, \frac{\sigma^2}{R^2} \frac{k\log(ep/k)}{m}.
%\end{align*}
%The following lower bound statement implies that the rate $\frac{\sigma}{\|\vct{x}\|_2^2}\sqrt{\frac{k \log p}{m}}$ is unimprovable:

\end{enumerate}

%%%%%%%%%%%%%%%%%%%%%%%%%%%%%%%%%%%
\section{Discussion}
\label{sec:discussion}
%%%%%%%%%%%%%%%%%%%%%%%%%%%%%%%%%%%

In this paper, we established the optimal rates of convergence for noisy sparse phase retrieval under the Gaussian design in the presence of sub-exponential noise, provided that the sample size is sufficiently large. Furthermore,  a thresholded gradient descent method called ``Thresholded Wirtinger Flow'' was introduced and shown to achieve the optimal rates. 

Iterative thresholding has been employed in a variety of problems in high-dimensional statistics, machine learning, and signal processing, under the assumption that the signal or parameter vector/matrix satisfies a sparse or low-rank constraint. Examples include compressed sensing/sparse approximation \cite{DDD2004, NT2009, MD2009, BD2009}, sparse principal component analysis \cite{Ma13, YZ2013}, high-dimensional regression \cite{ANW2012, YLZ2014, JTK2014}, and low-rank recovery \cite{CCS2010, KMO2010, lee2013near}.

%Other compressed sensing by iterative thresholding: \cite{CDD2008, GK2009, Foucart2011}

Regarding the application of iterative thresholding and projected gradient methods in high-dimensional $M$-estimation, their statistical optimality has been established when the empirical risk function satisfies certain properties, such as restrictive strong convexity and smoothness (RSC and RSS) \cite{ANW2012, YLZ2014, JTK2014}. Although our thresholded gradient method aims to solve \eqref{eq:erf} for a sparse solution, the existing analytical framework for high-dimensional $M$-estimation does not apply to the sparse phase retrieval problem, since the empirical risk function in \eqref{eq:erf} does not satisfy RSC in general, no matter how large the sample size is. Instead, we have shown that thresholded gradient methods can achieve optimal statistical precision for signal recovery, even when the empirical risk function does not satisfy the common assumption of RSC.    

Besides thresholded gradient methods, convexly and non-convexly regularized methods are also widely-used for high-dimensional $M$-estimation. In fact, some iterative thresholding methods are induced by regularizations; See, e.g., \cite{DDD2004}. Therefore, an alternative candidate method for solving the noisy sparse phase retrieval problem is to penalize the empirical risk function in \eqref{eq:erf} before taking the minimum, in order to promote a sparse solution. The major difficulty is apparently the non-convexity of the empirical risk function. An interesting result in \cite{LW2013} guarantees the statistical precision of all local optima, as long as the non-convex penalty satisfies certain regularity conditions, and the empirical risk function, possibly non-convex, satisfies the restricted strong convexity.  A similar result appeared in \cite{WLZ2014}, in which the empirical risk function is required to satisfy a sparse eigenvalue (SE) condition. 
However, back to noisy sparse phase retrieval, the empirical risk function in \eqref{eq:erf} satisfies neither RSC nor SE in general, so there is no guarantee that all local optima are consistent. A natural question is whether some penalized version of \eqref{eq:erf} is strongly convex in a sufficiently large neighborhood of its global minimum, such that a tractable initializer lies in this neighborhood provided the sample size is sufficiently large. Another interesting question is whether the global minimizer of such penalized version of \eqref{eq:erf} is a rate-optimal estimator of the original sparse signal. We leave these questions for future research.

%Instead, \prettyref{algo:init} prevents the subsequent iterative thresholding from being dragged into inconsistent local minima.

%%%%%%%%%%%%%%%%%%%%%%%%%%%%%%%%%%%
\section{Proof of \prettyref{thm:upper}}
\label{sec:proof}
%%%%%%%%%%%%%%%%%%%%%%%%%%%%%%%%%%%

In model \eqref{eq:measurement}, denote $S = \supp{\vct{x}}$, which implies $|S| = k$. Without loss of generality, we assume $S=\{1, \ldots, k\}$. As to the Gaussian design matrix $\mtx{A} \in \mathbb{R}^{m \times p}$, denote 
\begin{equation}
\label{eq:A_S}
\mtx{A}_{S}:=\begin{bmatrix} {\vct{a}_1}_{S}' \\ \vdots \\ {\vct{a}_m}_{S}' \end{bmatrix}, \quad \mtx{A}_{S^c}:=\begin{bmatrix} {\vct{a}_1}_{S^c}' \\ \vdots \\ {\vct{a}_m}_{S^c}' \end{bmatrix},
\end{equation}
both of which are in $\mathbb{R}^{m \times p}$.

%Throughout the paper, we assume that the $\eps_j$'s are independent (but not necessarily identically distributed) centered sub-exponential random variables with parameters $(\sigma^2, \sigma)$ in the sense that for all $|\lambda|< 1/\sigma$, the log moment generating function of $\eps_j$ satisfies $\log M_{\eps_j}(\lambda) \leq \lambda^2\sigma^2/2$. 
%
%\XL{Do we really need this? In addition, we focus on the case where $\sigma$ is upper bounded by an \emph{known} absolute constant $C > 0$. This assumption accommodates both Gaussian and Poisson noises.}

For any two two random variables/vectors/matrices/sets $X$ and $Y$, we denote by $X \indep Y$ if $X$ and $Y$ are independent.

\begin{lemma}
\label{lmm:independence}
From the model \eqref{eq:measurement}, we have $\vct{y} \indep \mtx{A}_{S^c}$. Moreover, we have $\{I_1, \ldots, I_k\} \indep \mtx{A}_{S^c}$ and $\phi \indep \mtx{A}_{S^c}$, where $\phi$ and $\{I_1, \ldots, I_k\}$ are defined in \eqref{eq:phi} and \eqref{eq:testing_seq}, respectively.
\end{lemma}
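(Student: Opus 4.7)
The plan is to exploit two basic independence facts built into the model: first, since the entries of $\mtx{A}$ are i.i.d.~standard Gaussians, the submatrices $\mtx{A}_S$ and $\mtx{A}_{S^c}$ are independent; second, the noise vector $\vct{\epsilon}$ is independent of $\mtx{A}$ by assumption. So the strategy is simply to express $\vct{y}$, each $I_l$ for $l \in S$, and $\phi^2$ as measurable functions of $(\mtx{A}_S, \vct{\epsilon})$ alone, and then invoke independence.

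First I would write $\vct{a}_j'\vct{x} = \vct{a}_{jS}'\vct{x}_S$, which holds because $\supp{\vct{x}}=S$. Consequently $y_j = |\vct{a}_{jS}'\vct{x}_S|^2 + \epsilon_j$ is a function of $(\vct{a}_{jS}, \epsilon_j)$ only, so $\vct{y}$ is a function of $(\mtx{A}_S, \vct{\epsilon})$. Combined with $(\mtx{A}_S, \vct{\epsilon}) \indep \mtx{A}_{S^c}$, this yields $\vct{y} \indep \mtx{A}_{S^c}$.

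Next, since we reduced to $S = \{1,\dots,k\}$ without loss of generality, for every $l \in S$ the coordinate $a_{jl}$ is a column of $\mtx{A}_S$; hence $I_l = \frac{1}{m}\sum_{j=1}^m y_j a_{jl}^2$ is a function of $(\mtx{A}_S, \vct{\epsilon})$, so $\{I_1, \ldots, I_k\} \indep \mtx{A}_{S^c}$. For $\phi^2 = \frac{1}{m}\sum_{j=1}^m y_j$, the claim is even more immediate because it is a function of $\vct{y}$ alone, and we have just shown $\vct{y} \indep \mtx{A}_{S^c}$.

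There is no real obstacle here; the entire content of the lemma is the structural remark that conditioning on the support of $\vct{x}$ makes the measurements depend on $\mtx{A}$ only through its columns indexed by $S$. The only thing to be careful about is stating the measurability/function-of argument cleanly so that the two ingredients ($\mtx{A}_S \indep \mtx{A}_{S^c}$ from the Gaussian product structure, and $\vct{\epsilon} \indep \mtx{A}$ from the model) combine unambiguously to give $(\mtx{A}_S, \vct{\epsilon}) \indep \mtx{A}_{S^c}$.
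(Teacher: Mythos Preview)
Your proposal is correct and is essentially the same argument as the paper's: express $\vct{y}$, $I_l$ for $l\in S$, and $\phi$ as functions of $(\mtx{A}_S,\vct{\epsilon})$ (equivalently of $\vct{y}$ and $\mtx{A}_S$), and then use that $(\mtx{A}_S,\vct{\epsilon})\indep\mtx{A}_{S^c}$. The paper's version is a bit terser, but there is no substantive difference in approach.
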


\begin{proof}
The fact $\vct{y}=|\mtx{A}\vct{x}|^2+\vct{\epsilon} = |\mtx{A}_S\vct{x}_S|^2+\vct{\epsilon}$ implies straightforwardly that $\vct{y} \indep \mtx{A}_{S^c}$. By \eqref{eq:testing_seq}, we know for all $l=1, \ldots, k$, $I_l$ are defined by $\vct{y}$ and $\mtx{A}_S$, which implies that $I_l \indep \mtx{A}_{S^c}$ for all $l=1, \ldots, k$. Finally, by \eqref{eq:phi}, we know $\phi$ is determined uniquely by $\vct{y}$, which implies that $\phi \indep \mtx{A}_{S^c}$.
\end{proof}
%The general idea for the proof of Theorem \ref{thm:upper} is the \emph{oracle sequence scheme}. That is, we first assume the support of $\vct{x}_0$, i.e. $S$ is known, and then construct an oracle sequence $\{\vct{x}^{(n)}\}_{n=0}^{\infty}$.\\

%Next, we define the iterations for the oracle sequence. Define
%\begin{equation}
%\label{eq:iteration_oracle}
%\vct{x}^{(n+1)} = \eta(\vct{x}^{(n)}):= \vct{x}^{(n)} - \frac{\mu}{\phi^2} \mathcal{T}_{\tau(\vct{x}^{(n)})} \circ \mtx{P}_S \left(\nabla f (\vct{x}^{(n)})\right),
%\end{equation}
%where $\mtx{P}_S$ is the projection on $S$. It suffices to prove the following lemma:
%
%
%
%
%We will prove that there exists $n_0$, such that $p_{n_0}$ is almost $1$ and $\theta_{n_0}$ is optimal. We will figure out $\theta_n$ and $p_n$, as well as prove this claim by induction.\\

\begin{lemma}
\label{lmm:norm_estimation}
On an event $\widetilde{E}_0$ with probability at least $1- \frac{3}{m}$, 
\[
1 - \left(2+C_0\frac{\sigma}{\|\vct{x}\|_2^2}\right)\sqrt{\frac{\log m}{m}} \leq \frac{\phi^2}{\|\vct{x}\|_2^2} \leq 1 +\left(2+C_0\frac{\sigma}{\|\vct{x}\|_2^2}\right)\sqrt{\frac{\log m}{m}}+\frac{2 \log m}{m}
\]
for some numerical constant $C_0>0$. As a consequence, as long as $\frac{m}{\log m} \geq C(\delta)\left(1+ \frac{\sigma^2}{\|\vct{x}\|_2^4}\right)$, there holds
\[
\frac{9}{10}\leq 1-\delta \leq \frac{\phi^2}{\|\vct{x}\|_2^2} \leq 1+ \delta \leq \frac{11}{10}.
\]
\end{lemma}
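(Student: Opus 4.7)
The plan is to decompose $\phi^2 = m^{-1}\sum_{j=1}^m y_j$ into a ``signal'' part and a ``noise'' part and concentrate each separately. Using $y_j = (\vec{a}_j'\vec{x})^2 + \epsilon_j$, I would write $\phi^2 = S_1 + S_2$ with $S_1 := m^{-1}\sum_j (\vec{a}_j'\vec{x})^2$ and $S_2 := m^{-1}\sum_j \epsilon_j$. Bounding $|S_1/\|\vec{x}\|_2^2 - 1|$ at rate $\sqrt{\log m/m}$ (plus a $\log m/m$ quadratic tail on the upper side) and $|S_2|$ at rate $\sigma\sqrt{\log m/m}$, together with a union bound, will yield the claimed two-sided inequality on an event of probability at least $1 - 3/m$.

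For $S_1$, I would observe that $m S_1/\|\vec{x}\|_2^2 \sim \chi^2_m$, since each $\vec{a}_j'\vec{x}/\|\vec{x}\|_2$ is i.i.d.\ standard Gaussian. The Laurent--Massart two-sided deviation bound, applied with deviation parameter $x = \log m$, then yields
\[
-\,2\sqrt{\log m/m} \;\leq\; \frac{S_1}{\|\vec{x}\|_2^2} - 1 \;\leq\; 2\sqrt{\log m/m} + 2\log m/m
\]
on an event of probability at least $1 - 2/m$. This accounts exactly for the $2\sqrt{\log m/m}$ contributions on both sides and the $2\log m/m$ term on the upper side of the statement.

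For $S_2$, the $\epsilon_j$ are independent, centered, and sub-exponential with $\|\epsilon_j\|_{\psi_1} \leq \sigma$. Bernstein's inequality for sums of sub-exponentials (Proposition 5.16 of \cite{vershyninNARMT}) yields
\[
\P(|S_2| > t) \leq 2\exp\!\left(-c\, m \min\!\left\{\frac{t^2}{\sigma^2},\,\frac{t}{\sigma}\right\}\right),
\]
and taking $t = C_0 \sigma \sqrt{\log m/m}$ with $C_0$ large enough places the deviation in the Gaussian (i.e.\ $t^2/\sigma^2$) regime as soon as $m \gtrsim \log m$, delivering $|S_2| \leq C_0 \sigma\sqrt{\log m/m}$ with probability at least $1 - 1/m$. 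Dividing by $\|\vec{x}\|_2^2$ contributes precisely the $C_0(\sigma/\|\vec{x}\|_2^2)\sqrt{\log m/m}$ pieces in the statement. Intersecting the two deviation events produces the event $\widetilde{E}_0$ and completes the first assertion.

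For the consequence, I would choose $C(\delta)$ large enough that the sample-size condition $m/\log m \geq C(\delta)(1 + \sigma^2/\|\vec{x}\|_2^4)$ simultaneously forces $2\sqrt{\log m/m} + 2\log m/m \leq \delta/2$ and $C_0(\sigma/\|\vec{x}\|_2^2)\sqrt{\log m/m} \leq \delta/2$, so that $|\phi^2/\|\vec{x}\|_2^2 - 1| \leq \delta \leq 1/10$. There is no serious obstacle here; the only mild technical check is that the chosen $t$ lies in the Gaussian branch of Bernstein's inequality, which is automatic once $m \geq \log m$ and therefore certainly under the stated hypothesis.
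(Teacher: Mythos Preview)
Your proposal is correct and follows essentially the same approach as the paper: the same signal/noise decomposition $\phi^2 = m^{-1}\sum_j (\vct{a}_j'\vct{x})^2 + m^{-1}\sum_j \epsilon_j$, the Laurent--Massart $\chi^2$ bound (with $x=\log m$) for the first term, and Bernstein's inequality for sub-exponentials for the second, combined via a union bound. The only cosmetic difference is that the paper packages the Bernstein step into its \prettyref{lmm:noise}, whereas you invoke it directly.
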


\begin{proof}
By the definition of $\phi^2$ and $y_j, j=1, \ldots, m$, we have
\[
\phi^2 = \frac{1}{m} \sum_{j=1}^m (\vct{a}_j' \vct{x})^2 + \frac{1}{m} \sum_{j=1}^m\epsilon_j.
\]
As shown in Lemma \ref{lmm:noise}, with probability at least $1 - \frac{1}{m}$, 
\[
\left|\frac{1}{m} \sum_{j=1}^m\epsilon_j\right| \leq C_0 \sigma \sqrt{\frac{\log m}{m}}
\]
for some numerical constant $C_0>0$. Moreover, since $\vct{x}$ is fixed, there holds
\[
 \frac{\sum_{j=1}^m (\vct{a}_j' \vct{x})^2}{\|\vct{x}\|_2^2} \sim \chi^2(m).
 \]
 By Lemma 4.1 of \cite{Laurent00}, with probability at least $1 - \frac{2}{m}$, we have
 \[
 1 - 2\sqrt{\frac{\log m}{m}} \leq  \frac{\sum_{j=1}^m (\vct{a}_j' \vct{x})^2}{m\|\vct{x}\|_2^2} \leq 1 + 2\sqrt{\frac{\log m}{m}}+\frac{2 \log m}{m}.
 \]
 The proof is done.
\end{proof}

\begin{lemma}
\label{thm:initialization}
Let  $\alpha=K\left(1+\frac{\sigma}{\|\vct{x}\|_2^2}\right)$ for some large enough absolute constant $K$, and $\widehat{\vct{x}}^{(0)}$ be defined in \prettyref{algo:init}. There exists a random vector $\vct{x}^{(0)}$ satisfying $\vct{x}^{(0)} \indep \mtx{A}_{S^c}$ and $\supp{\vct{x}^{(0)}} \subset S$, such that on an event $E_{01}$ with probability at least $1 - \frac{16}{m} - 2e^{-k}$, we have
\[
\vct{x}^{(0)}=\widehat{\vct{x}}^{(0)},\text{~and~}{\min(\|\vct{x}^{(0)} - \vct{x}\|_2,  \|\vct{x}^{(0)} + \vct{x}\|_2)} \leq \frac{1}{6}{\|\vct{x}\|_2},
\]
provided $m\geq C \left(1+\frac{\sigma^2}{\|\vct{x}\|_2^4}\right)k^2 \log (mp)$. Here $C$ is an absolute constant.
\end{lemma}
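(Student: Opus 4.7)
The plan is to decouple the initializer from $\mtx{A}_{S^c}$ by reconstructing it from only those marginal statistics that are, by \prettyref{lmm:independence}, functions of $(\vct{y}, \mtx{A}_S)$; then show that on a high-probability event the decoupled version coincides with $\widehat{\vct{x}}^{(0)}$; and finally bound its $\ell_2$ distance to $\vct{x}$ via a spectral perturbation argument.

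Concretely, I would define $\widetilde{S}_0 := \{l \in S : I_l > (1 + \alpha\sqrt{\log(mp)/m})\phi^2\}$ (note the restriction $l \in S$), and set $\vct{x}^{(0)} := \phi\, \widetilde{\vct{v}}_1$, where $\widetilde{\vct{v}}_1$ is the leading eigenvector of $\mtx{W}_{\widetilde{S}_0 \widetilde{S}_0} := m^{-1}\sum_j y_j \vct{a}_{j\widetilde{S}_0} \vct{a}_{j\widetilde{S}_0}'$. Since $\phi$, $\{I_l\}_{l \in S}$ and the entries of $\mtx{W}_{\widetilde{S}_0\widetilde{S}_0}$ are all functions of $\vct{y}$ and $\mtx{A}_S$, \prettyref{lmm:independence} immediately gives $\vct{x}^{(0)} \indep \mtx{A}_{S^c}$, and $\supp{\vct{x}^{(0)}} \subset S$ holds by construction.

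To align $\vct{x}^{(0)}$ with $\widehat{\vct{x}}^{(0)}$, I would introduce the ``no false positive'' event $E_{\mathrm{NFP}} := \{\max_{l \in S^c} I_l \leq (1 + \alpha \sqrt{\log(mp)/m})\phi^2\}$, on which $\widehat{S}_0 = \widetilde{S}_0$ and hence $\widehat{\vct{x}}^{(0)} = \vct{x}^{(0)}$. For $l \in S^c$ the entries $a_{jl}$ are independent of $(\vct{y}, \mtx{A}_S)$ with $\E a_{jl}^2 = 1$, so $I_l - \phi^2 = m^{-1}\sum_j y_j(a_{jl}^2 - 1)$ is, conditionally on $\vct{y}$, a sum of centered sub-exponential variables; a Bernstein inequality combined with the a priori control $m^{-1}\sum_j y_j^2 \lesssim \|\vct{x}\|_2^4 + \sigma^2$ and $\phi^2 \asymp \|\vct{x}\|_2^2$ from \prettyref{lmm:norm_estimation}, followed by a union bound over $p$ coordinates, gives $P(E_{\mathrm{NFP}}) \geq 1 - O(1/m)$ provided $\alpha \gtrsim 1 + \sigma/\|\vct{x}\|_2^2$. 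A parallel concentration argument for $l \in S$ (using $\E I_l = \|\vct{x}\|_2^2 + 2 x_l^2$ and Isserlis' identity on Gaussian fourth moments) shows that any $l \in S$ missed by $\widetilde{S}_0$ must satisfy $x_l^2 \lesssim \alpha \|\vct{x}\|_2^2 \sqrt{\log(mp)/m}$, so $\|\vct{x}_{S \setminus \widetilde{S}_0}\|_2^2 \lesssim k \alpha^2 \|\vct{x}\|_2^4 \log(mp)/m$, which is at most $\|\vct{x}\|_2^2/200$ (say) under the hypothesis $m \gtrsim (1 + \sigma^2/\|\vct{x}\|_2^4)k^2 \log(mp)$.

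For the spectral step, the relevant population matrix, conditionally on any fixed support $T \subset S$, is $\mtx{M}_T := \|\vct{x}\|_2^2 \mtx{I}_T + 2\vct{x}_T \vct{x}_T'$ (the noise contributes zero mean since $\epsilon_j \indep \vct{a}_j$), whose leading eigenvector is $\vct{u}_T := \vct{x}_T/\|\vct{x}_T\|_2$ with spectral gap $2\|\vct{x}_T\|_2^2$. A matrix Bernstein inequality on the $|T|$-dimensional sum of rank-one sub-exponential matrices $\mtx{W}_{TT} - \mtx{M}_T$, combined with a union bound over the at most $2^k$ subsets $T \subset S$, yields $\|\mtx{W}_{TT} - \mtx{M}_T\| \lesssim (\|\vct{x}\|_2^2 + \sigma) \sqrt{k \log m / m}$ uniformly with probability at least $1 - 2e^{-k}$. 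A Davis--Kahan $\sin\Theta$ bound then gives $\min_{\pm} \|\pm \widetilde{\vct{v}}_1 - \vct{u}_{\widetilde{S}_0}\|_2 \lesssim (1 + \sigma/\|\vct{x}\|_2^2)\sqrt{k \log m / m}$, and combining with $\phi \approx \|\vct{x}\|_2$, $\|\vct{x}_{\widetilde{S}_0}\|_2 \approx \|\vct{x}\|_2$, and the missed-support bound from the previous paragraph delivers the $\|\vct{x}\|_2/6$ estimate after sign correction.

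The hard part will be the uniform concentration of $\{I_l\}$ across both $l \in S$ and $l \in S^c$: I have to simultaneously handle Gaussian fourth-moment fluctuations and sub-exponential noise cross-terms, and keep enough slack in the constants so that the quadratic sample-size condition $m \gtrsim (1 + \sigma^2/\|\vct{x}\|_2^4)k^2 \log(mp)$ actually converts into the explicit $1/6$ factor stated in the conclusion. The probability bookkeeping—collecting $O(1/m)$ pieces from Bernstein/chi-square tails to match $16/m$, and the $2e^{-k}$ coming from the $2^k$-union-bounded matrix Bernstein step—needs care but is essentially routine once the main moment calculations are in hand.
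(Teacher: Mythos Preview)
Your plan matches the paper's proof in structure: define the decoupled $\vct{x}^{(0)}$ via $\widetilde{S}_0 = \{l\in S: I_l > (1+\alpha\sqrt{\log(mp)/m})\phi^2\}$, establish the no-false-positive event so $\widehat{S}_0=\widetilde{S}_0$, bound the mass on $S\setminus\widetilde{S}_0$, and finish with Davis--Kahan. Two implementation points are worth adjusting.

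First, the $2^k$ union bound in the spectral step is unnecessary and makes the tail arithmetic tighter than it needs to be. The paper simply proves concentration of $\mtx{W}_{SS}$ around $(\E\mtx{W})_{SS}$ on the \emph{fixed} support $S$, then uses $\|\mtx{W}_{S_0S_0}-(\E\mtx{W})_{S_0S_0}\|\le\|\mtx{W}_{SS}-(\E\mtx{W})_{SS}\|$ for any (random) $S_0\subset S$. Moreover, rather than a single matrix Bernstein on $y_j\vct{a}_{jS}\vct{a}_{jS}'$ (whose entries are products of four Gaussians and a sub-exponential, so the tail hypotheses need care), the paper splits $y_j=(\vct{a}_j'\vct{x})^2+\epsilon_j$ and invokes \prettyref{lmm:concentration} for the signal part and \prettyref{lmm:concentration_noise} (conditionally on $\vct\epsilon$) for the noise part; this is where the $2e^{-k}$ arises.

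Second, your missed-support bookkeeping slipped: from $x_l^2\lesssim \alpha\|\vct{x}\|_2^2\sqrt{\log(mp)/m}$ for each $l\in S\setminus\widetilde{S}_0$ you get $\|\vct{x}_{S\setminus\widetilde{S}_0}\|_2^2\lesssim k\,\alpha\,\|\vct{x}\|_2^2\sqrt{\log(mp)/m}$, not $k\alpha^2\|\vct{x}\|_2^4\log(mp)/m$. With $\alpha\asymp 1+\sigma/\|\vct{x}\|_2^2$ this is exactly what forces the $k^2$ in the sample-size condition, matching the paper.
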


\begin{proof}
Recall that $S=\{1, \ldots, k\}$ and $I_l=\frac{1}{m}\sum_{j=1}^m y_j a_{jl}^2$ for $l=1, \ldots, p$. Define
\begin{equation}
\label{eq:initial_support_oracle}
S_0 =\left\{ l \in S: I_l > \left(1+  \alpha \sqrt{\frac{\log (mp)}{m}}\right)\phi^2\right\} \subset S.
\end{equation}
Since $\{I_1, \ldots, l_k, \phi\} \indep \mtx{A}_{S^c}$, we have $S_0 \indep \mtx{A}_{S^c}$. Define $\vct{x}^{(0)}\in \mathbb{R}^p$ as the leading eigenvector of
\[
\mtx{W}_{S_0S_0}:=\frac{1}{m} \sum_{j=1}^m y_j\vct{a}_{j S_0} \vct{a}_{j S_0}' \in \mathbb{R}^{p \times p}
\]
with $2$-norm $\phi$. This easily implies $\supp{\vct{x}^{(0)}} \subset S_0 \subset S$. Since $\{\mtx{W}_{S_0S_0}, \phi\} \indep \mtx{A}_{S^c}$, we also have $\vct{x}^{(0)} \indep \mtx{A}_{S^c}$. 

To simplify notation, let us write for any $j\in [m]$, $\wt{y}_j := (\vct{a}_j' \vct{x})^2 = (\vct{a}_{jS}' \vct{x})^2$, which implies $y_j = \wt{y}_j + \eps_j$. Notice that
\begin{align}
	\label{eq:init-1}
I_l-\phi^2 = \frac{1}{m}\sum_{j=1}^m \wt{y}_j (a_{jl}^2 - 1)
+ \frac{1}{m}\sum_{j=1}^m \eps_j (a_{jl}^2 - 1),
\end{align}
in which we will first control the second term. For a given $l \in [p]$, we know  $a_{1l}^2 - 1, \ldots, a_{ml}^2-1$ are i.i.d. centered sub-exponential random variables with sub-exponential norms being an absolute constant. Then, by Bernstein inequality (see, e.g., Proposition 16 in \cite{vershyninNARMT}), we have with probability at least $1 - \frac{2}{mp}$,
\begin{align*}
\left|\sum_{j=1}^m \eps_j (a_{jl}^2 - 1)\right| \leq C_0\left(\|\vct{\eps} \|_2\sqrt{\log (mp)} + \|\vct{\eps} \|_\infty \log (mp) \right)
\end{align*}
for some absolute constant $C_0$. Then by Lemma \ref{lmm:noise}, with probability at least $1 - 4/m$, we have
\begin{equation}
\label{eq:init-noise}
\max_{1 \leq l \leq p}\left|\frac{1}{m}\sum_{j=1}^m \eps_j (a_{jl}^2 - 1)\right|\leq 
C_0\sigma \left(\sqrt{\frac{\log (mp)}{m}} + \frac{(\log m)(\log (mp))}{m}\right) \leq C_0\sigma \sqrt{\frac{\log (mp)}{m}}, 
\end{equation}
provided $m \geq C (\log p)$ for some absolute constant $C$.

%
%
%we note that 
%\begin{align*}
%\P\sth{\sum_{\eps_j>0} \eps_j (a_{jl}^2 - 1) \geq 2\|\vct{\eps}\|\sqrt{x} + 2\|\eps\|_\infty x} & \leq \exp(-x),\\
%\P\sth{\sum_{\eps_j>0} \eps_j (a_{jl}^2 - 1) \leq -2\|\vct{\eps}\|\sqrt{x} } & \leq \exp(-x),
%\end{align*}
%and the same bounds hold for summing over all terms with negative $\eps_j$ values. Combining the two parts, we obtain that with probability at least $1-4p\exp(-x)$, 
%\begin{align*}
%\left|\sum_{j=1}^m \eps_j (a_{jl}^2 - 1)\right| \leq 4\|\vct{\eps} \|\sqrt{x} + 2\|\vct{\eps} \|_\infty x,\quad \forall l\in [p].
%\end{align*}
%Due to the sub-gaussianity of $\eps_j$, we obtain from integration by parts that $\E \eps_j^2 \leq 4\sigma^2$ and $\Var(\eps_j^2) \leq \E \eps_j^4 \leq 32\sigma^4$.
%Thus, the sub-gaussian tail bound and the union bound jointly lead to
%\begin{align*}
%\P \sth{\|\vct{\eps}\|_\infty > \sigma x} \leq 2p \exp(-x^2/2),
%\end{align*}
%and Chebyshev's inequality leads to
%\begin{align*}
%\P\sth{ \|\vct{\eps}\|_2^2 > 4m\sigma^2 + \sqrt{32m}\sigma^2 x } \leq x^{-2}.
%\end{align*}
%Thus, with probability at least $1 - \frac{7}{m}$, for all $l\in [p]$,
%\begin{align}
%\label{eq:init-3}
%\left|\frac{1}{m}\sum_{j=1}^m \eps_j (a_{jl}^2 - 1)\right|\leq 
%8\sqrt{1+\sqrt{2}}\sigma \sqrt{\log(mp)\over m} + 2\sqrt{2}\sigma \frac{(\log(mp))^{3/2}}{m} 
%\leq 13\sigma \sqrt{\log(mp)\over m},
%\end{align}
%where the last inequality holds when $m \geq m_0$.\\

Next, we prove that with high probability $\vct{x}^{(0)} = \widehat{\vct{x}}^{(0)}$. It suffices to prove $\widehat{S}_0=S_0$, i.e., $\widehat{S}_0 \subset S$.  For any $l\in S^c$, $a_{jl}$ and $\wt{y}_j$ are independent, and so conditional on $\{\wt{y}_j, j\in [m]\}$, $\sum_{j=1}^m \wt{y}_j a_{jl}^2$ is a weighted sum of $\chi^2_1$ variables.  By Lemma 4.1 of \cite{Laurent00}, 
\begin{align*}
\P \left\{ \sum_{j=1}^m \wt{y}_j(a_{jl}^2 - 1) > 2\sqrt{t}\left(\sum_{j=1}^m \wt{y}_j^2\right)^{\frac{1}{2}}  +  2 \left(\max_{j} \wt{y}_j\right)t  \right\} \leq \exp(-t).
\end{align*}
Moreover, Chebyshev's inequality, the Gaussian tail bound and the union bound lead to
\begin{align*}
% \P\left\{\sum_{i=1}^m y_i > m + 2\sqrt{mx } + 2x \right\} & \leq \exp(-x), \\
\P\left\{\sum_{j=1}^m \wt{y}_j^2 / \|\vct{x}\|_2^4 > 3m + \sqrt{96m}t \right\} & \leq t^{-2}, \\
\P\left\{\max_j \wt{y}_j / \|\vct{x}\|_2^2 > t \right\} & \leq 2m\exp(-t/2).
\end{align*}
Thus, with probability at least $1 - \frac{4}{m}$, for all $l \in S^c$,
\begin{align}
\label{eq:init-2}
\frac{1}{m}\sum_{j=1}^m \wt{y}_j (a_{jl}^2 - 1)	\leq 2\sqrt{3+\sqrt{96}}\|\vct{x}\|_2^2 \sqrt{\log(mp)\over m} 
+ 8\|\vct{x}\|_2^2 \frac{(\log(mp))^2}{m} \leq 8 \|\vct{x}\|_2^2 \sqrt{\log(mp)\over m} .
\end{align}
Here the last inequality holds when $m\geq C$ for some absolute constant $C$.

Since $\alpha=K\left(1+\frac{\sigma}{\|\vct{x}\|_2^2}\right)$ with large enough $K$, by \eqref{eq:init-1}, \eqref{eq:init-2}, \eqref{eq:init-noise} and Lemma \ref{lmm:norm_estimation}, we obtain that with probability at least $1- \frac{11}{m}$, for all $l\in S^c$,
\begin{align*}
I_l-\phi^2 \leq (8\|\vct{x}\|_2^2 + C_0\sigma)\sqrt{\log(mp)\over m} \leq \alpha \phi^2\sqrt{\log(mp)\over m},
\end{align*}
which implies that $\wh{S}_0\subset S$.\\

% Combining the last four inequalities, we obtain that 
% \begin{align*}
% \P \left\{\exists j\in S^c, s.t.~\sum_{i=1}^m y_i a_{ij}^2 > m + 10\sqrt{2m\log m} + 8\sqrt{2}(\log m)^2 + 2\log m \right\} \leq \frac{5}{m}.
% \end{align*}

Next, we prove that $\|\vct{x}^{(0)} - \vct{x}\|_2/\|\vct{x}\|_2 \leq \frac{1}{6}$ with high probability. For any fixed $l\in S$, straightforward calculation yields $\E \wt{y}_j a_{jl}^2 = \|\vct{x} \|_2^2 + 2x_l^2$. On the other hand, 
\begin{align*}
\E \wt{y}_j^2 a_{jl}^4 = 105 x_l^4 + 90 x_l^2 (\| \vct{x} \|_2^2 - x_l^2) + 9 (\| \vct{x} \|_2^2 -x_l^2)^2.
\end{align*}
So for $X_j = \| \vct{x} \|_2^2 + 2x_l^2 - \wt{y}_j a_{jl}^2$, we have $X_j\leq \| \vct{x} \|_2^2 + 2x_l^2 \leq 3\| \vct{x} \|_2^2$, $\E X_i = 0$ and $\E X_i^2 = 20x_j^4 + 68 \| \vct{x} \|_2^2 x_l^2 + 8\| \vct{x} \|_2^4 \leq 96\| \vct{x} \|_2^4$.
By Lemma \ref{lmm:one-sided-tail}, 
\begin{align*}
\P \left\{\sum_{j=1}^m \wt{y}_j a_{jl}^2 - m( \| \vct{x} \|_2^2 + 2x_l^2) \leq - t \right\} 
\leq \exp\left(-\frac{t^2}{192 \| \vct{x} \|_2^4 m} \right).
\end{align*}
Next, Lemma 4.1 of \cite{Laurent00} leads to with probability at least $1-\frac{1}{m}$,
\begin{align*}
\frac{1}{m}\sum_{j=1}^m \wt{y}_j - \| \vct{x} \|_2^2 \leq \pth{2\sqrt{\log m\over m} + \frac{2\log m}{m}}\|\vct{x}\|_2^2\leq 2.1\|\vct{x}\|_2^2\sqrt{\log m\over m}.
\end{align*}
The last two inequalities, together with \eqref{eq:init-noise} and \eqref{eq:init-1}, imply that with probability at least $1- \frac{6}{m}$, for all $l\in S$,
\begin{align*}
I_l-\phi^2 \geq 2x_l^2 - (16\|\vct{x} \|_2^2 + C_0\sigma) \sqrt{\log(mp)\over m}.
\end{align*}
Define $S_- = \left\{l\in S: x_l^2 \geq \left(11 + \frac{3}{5}\alpha\right) \|\vct{x}\|_2^2 \sqrt{\log (mp)\over m} \right\}$. Then, for all $l\in S_-$ we have
\begin{align*}
I_l-\phi^2 \geq (\frac{6}{5}\alpha \|\vct{x} \|_2^2  + 6\|\vct{x} \|_2^2 - C_0\sigma) \sqrt{\log(mp)\over m}.
\end{align*}
Since $\alpha=K\left(1+\frac{\sigma}{\|\vct{x}\|_2^2}\right)$ with sufficiently large absolute constant $K$, by lemma \ref{lmm:norm_estimation}, we have or all $l\in S_-$,
\begin{align*}
I_l-\phi^2 \geq \alpha \phi^2 \sqrt{\log(mp)\over m},
\end{align*}
with probability at least $1 - 9/m$. This implies $S_-  \subset S_0$.\\

Therefore, we have $\|\vct{x} - \vct{x}_{S_0}\|_2^2 \leq \|\vct{x} - \vct{x}_{S_{-}}\|_2^2 \leq (11+0.6\alpha)\|\vct{x}\|_2^2 \sqrt{\frac{k^2 \log (mp)}{m}} \leq \delta^2 \|\vct{x}\|_2^2$, provided that $m\geq C(\delta) \left(1+\frac{\sigma^2}{\|\vct{x}\|_2^4}\right)k^2 \log (mp)$. Notice that $\E \mtx{W} = \|\vct{x}\|_2^2 \mtx{I}_p + 2\vct{x}\vct{x}'$, which implies that $(\E \mtx{W})_{S S} = \|\vct{x}\|_2^2 (\mtx{I}_p)_{SS} + 2 \vct{x}\vct{x}'$. Furthermore, by the definition of $\mtx{W}$, we have
\[
\mtx{W}_{SS} = \frac{1}{m}\sum_{j=1}^m \left|{\vct{a}_j}_S'\vct{x}\right|^2 {\vct{a}_j}_S{\vct{a}_j}_S' + \frac{1}{m}\sum_{j=1}^m \epsilon_j {\vct{a}_j}_S{\vct{a}_j}_S'.
\]
By Lemma \ref{lmm:concentration}, with probability at least $1 - 1/m$, we have
\[
\left\|\frac{1}{m}\sum_{j=1}^m |{\vct{a}_j}_S' \vct{x}|^2 {\vct{a}_j}_S {\vct{a}_j}_S' - \left(\|\vct{x}\|_2^2 (\mtx{I}_p)_{SS}+2\vct{x}\vct{x}'\right)\right\| \leq \frac{\delta}{2} \|\vct{x}\|_2^2,
\]
 provided $m \geq C(\delta) k \log p$. Moreover, by Lemma \ref{lmm:noise} and Lemma \ref{lmm:concentration_noise}, with probability at least $1 - 2/m - 2e^{-k}$, we have $\norm{\sum_{j=1}^m \epsilon_j \vct{a}_{jS} \vct{a}_{jS}'} \leq C_0 \sigma \sqrt{m(k+\log m)}$. By assuming $m \geq C(\delta) \frac{\sigma^2}{\|\vct{x}\|_2^4} k \log (mp)$, we have $\frac{1}{m}\norm{\sum_{j=1}^m \epsilon_j \vct{a}_{jS} \vct{a}_{jS}'} \leq \frac{\delta}{2} \|\vct{x}\|_2^2$. This implies that
\[
\left\|\mtx{W}_{S_0 S_0} - (\E \mtx{W})_{S_0 S_0}\right\| \leq \left\|\mtx{W}_{S S} - (\E \mtx{W})_{S S}\right\| \leq \delta \|\vct{x}\|_2^2.
\]
It is noteworthy that the leading eigenvector of $(\E \mtx{W})_{SS}$ with unit norm is $\vct{x}_{S_0}/\|\vct{x}_{S_0}\|_2$, and the eigengap between the leading two eigenvalues of $(\E \mtx{W})_{S_0 S_0}$ is $2\|\vct{x}_{S_0}\|_2^2$. Recall that $\vct{x}^{(0)}$ is the leading eigenvector $\mtx{W}_{S_0 S_0}$ with norm $\phi$. Then by the Sin-Theta theorem,
\[
\left\|\frac{\vct{x}^{(0)}(\vct{x}^{(0)})^T}{\phi^2} - \frac{\vct{x}_{S_0}\vct{x}_{S_0}^T}{\|\vct{x}_{S_0}\|_2^2}\right\|\leq \frac{\delta \|\vct{x}\|_2^2}{2\|\vct{x}_{S_0}\|_2^2 - \delta\|\vct{x}\|_2^2} \leq \frac{\delta}{2-5\delta}.
\]
By Lemma \ref{lmm:norm_estimation}, we have $1+\delta \geq \phi/\|\vct{x}\|_2\geq 1- \delta$. Together with $1 \geq \|\vct{x}_{S_0}\|_2/\|\vct{x}\|_2 \geq 1- \delta$, we can easily obtain that $\min(\|\vct{x}^{(0)} - \vct{x}\|_2, \|\vct{x}^{(0)} + \vct{x}\|_2) \leq C_0 \delta \|\vct{x}\|_2$ for some absolute constant $C_0$. By letting $\delta$ be small enough, we have $\min(\|\vct{x}^{(0)} - \vct{x}\|_2, \|\vct{x}^{(0)} + \vct{x}\|_2) \leq 1/6 \|\vct{x}\|_2$.

In conclusion, we have
\[
\P\left(\vct{x}^{(0)}=\widehat{x}^{(0)} \text{~and~} {\min(\|\vct{x}^{(0)} - \vct{x}\|_2, \|\vct{x}^{(0)} + \vct{x}\|_2)}\leq 1/6{\|\vct{x}\|_2}  \right) \geq 1- \frac{16}{m} - 2e^{-k}.
\]
\end{proof}

\begin{lemma}
\label{thm:contraction}
Define $\eta(\vct{z}) = \mathcal{T}_{\frac{\mu}{\phi^2}\tau(\vct{z})} \left(\vct{z} - \frac{\mu}{\phi^2} \nabla f (\vct{z})_S\right)$. With probability at least $1 - \frac{15}{m} - 4e^{-k}$, for all $\vct{z} \in \mathbb{R}^p$ satisfying $\|\vct{z} - \vct{x}\|_2 \leq \frac{1}{6}\|\vct{x}\|_2$ and $\supp{\vct{z}} \subset S$, we have
\[
\frac{\|\eta(\vct{z}) - \vct{x}\|_2}{\|\vct{x}\|_2} \leq \left(1- \frac{\mu}{8}\right)\frac{\|\vct{z} - \vct{x}\|_2}{\|\vct{x}\|_2} + C_0\frac{\mu \sigma}{\|\vct{x}\|_2^2}  \sqrt{\frac{k\log p}{m}},
\]
provided $\mu \leq \mu_0$ and $m\geq C k^2 \log p$. Here $C_0$, $C$, and $\mu_0$ are numerical constants. This implies that, on an event $E_{02}$ with probability at least $1 - \frac{30}{m} - 8e^{-k}$, for all $\vct{z} \in \mathbb{R}^p$ satisfying $\min(\|\vct{z} - \vct{x}\|_2, \|\vct{z} +\vct{x}\|_2) \leq \frac{1}{6}\|\vct{x}\|_2$ and $\supp{\vct{z}} \subset S$, we have
\[
{\min(\|\eta(\vct{z}) - \vct{x}\|_2, \|\eta(\vct{z}) + \vct{x}\|_2)} \leq \left(1- \frac{\mu}{8}\right) {\min(\|\vct{z} - \vct{x}\|_2, \|\vct{z} +\vct{x}\|_2)} + C_0\frac{\mu \sigma}{\|\vct{x}\|_2}  \sqrt{\frac{k\log p}{m}}.
\]

\end{lemma}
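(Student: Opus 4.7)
The plan is to split the error using the triangle inequality. Setting $\vct{u} = \vct{z} - (\mu/\phi^2)\nabla f(\vct{z})_S$ and $\tau' = (\mu/\phi^2)\tau(\vct{z})$, we have $\eta(\vct{z}) = \mathcal{T}_{\tau'}(\vct{u})$, and $\|\eta(\vct{z}) - \vct{x}\|_2 \leq \|\mathcal{T}_{\tau'}(\vct{u}) - \vct{u}\|_2 + \|\vct{u} - \vct{x}\|_2$. Because $\vct{u}$ is supported in the fixed $k$-element set $S$, the coordinate-wise property $|\mathcal{T}_\tau(a) - a| \leq \tau$ gives $\|\mathcal{T}_{\tau'}(\vct{u})-\vct{u}\|_2 \leq \sqrt{k}\,\tau'$. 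So it remains to (i) show that the linearized descent term $\|\vct{u} - \vct{x}\|_2$ contracts, and (ii) bound the threshold contribution $\sqrt{k}\,\tau'$.

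For (i), use the factorization $(\vct{a}_j'\vct{z})^2 - (\vct{a}_j'\vct{x})^2 = [\vct{a}_j'(\vct{z}-\vct{x})][\vct{a}_j'(\vct{z}+\vct{x})]$ to write $\nabla f(\vct{z}) = H(\vct{z})(\vct{z}-\vct{x}) + \nabla n(\vct{z})$, where $H(\vct{z}) := m^{-1}\sum_{j}[\vct{a}_j'(\vct{z}+\vct{x})](\vct{a}_j'\vct{z})\vct{a}_j\vct{a}_j'$ is the ``signal'' part and $\nabla n(\vct{z}) := -m^{-1}\sum_{j}\eps_j(\vct{a}_j'\vct{z})\vct{a}_j$ the ``noise'' part. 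Since $\vct{z}-\vct{x}$ is supported on $S$, this yields $\vct{u}-\vct{x} = (I-(\mu/\phi^2)H(\vct{z}))_{SS}(\vct{z}-\vct{x}) - (\mu/\phi^2)\nabla n(\vct{z})_S$. Standard Gaussian moment identities give $\E H(\vct{z}) = (\|\vct{z}\|_2^2 + \vct{x}'\vct{z})I + 2\vct{z}\vct{z}' + \vct{x}\vct{z}' + \vct{z}\vct{x}'$, which at $\vct{z}=\vct{x}$ equals $2\|\vct{x}\|_2^2 I + 4\vct{x}\vct{x}'$. For $\vct{z}$ in the ball $\|\vct{z}-\vct{x}\|_2 \leq \|\vct{x}\|_2/6$, a direct quadratic-form computation shows $\E H(\vct{z})_{SS}$ is spectrally sandwiched between positive constant multiples of $\|\vct{x}\|_2^2 I_S$ on the $k$-dimensional subspace indexed by $S$. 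I would then upgrade this uniformly in $\vct{z}$ via an $\epsilon$-net over the $k$-dimensional ball together with matrix Bernstein-style concentration, provided $m \gtrsim k\log p$. Combined with $\phi^2 \asymp \|\vct{x}\|_2^2$ from Lemma~\ref{lmm:norm_estimation}, this yields $\|(I - (\mu/\phi^2)H(\vct{z}))_{SS}\| \leq 1 - c_1 \mu$ for $\mu \leq \mu_0$. A parallel uniform sub-exponential Bernstein estimate gives $\|\nabla n(\vct{z})_S\|_2 \lesssim \sigma\|\vct{x}\|_2 \sqrt{k\log p / m}$.

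For (ii), I would split $(|\vct{a}_j'\vct{z}|^2 - y_j)^2 \leq 2(|\vct{a}_j'\vct{z}|^2 - |\vct{a}_j'\vct{x}|^2)^2 + 2\eps_j^2$ inside the sum defining $\tau(\vct{z})^2$ and concentrate the two pieces (the first quartic in $\vct{a}_j$, the second mixing $\vct{a}_j$ with sub-exponential $\eps_j$) to obtain $(1/m)\sum_j(|\vct{a}_j'\vct{z}|^2-y_j)^2(\vct{a}_j'\vct{z})^2 \lesssim \|\vct{x}\|_2^2(\|\vct{z}-\vct{x}\|_2^2\|\vct{x}\|_2^2 + \sigma^2)$ uniformly in $\vct{z}$, using $\|\vct{z}\pm\vct{x}\|_2 \asymp \|\vct{x}\|_2$. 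This gives $\sqrt{k}\,\tau' \lesssim \mu\sqrt{k\log p/m}\,\|\vct{z}-\vct{x}\|_2 + (\mu\sigma/\|\vct{x}\|_2)\sqrt{k\log p/m}$. Assembling (i) and (ii), $\|\eta(\vct{z}) - \vct{x}\|_2 \leq (1 - c_1\mu + C\mu\sqrt{k\log p/m})\|\vct{z}-\vct{x}\|_2 + C'(\mu\sigma/\|\vct{x}\|_2)\sqrt{k\log p/m}$, and under $m \geq Ck^2\log p$ the cross-term is negligible compared to the contraction, giving $(1-\mu/8)$ after constant tracking. The second claim (with the $\min$) follows by sign symmetry: $f$ is even in $\vct{z}$, $\nabla f$ is odd, $\tau(\vct{z})$ is even, and for odd $\mathcal{T}$ (which covers hard/soft thresholding) $\eta(-\vct{z}) = -\eta(\vct{z})$; hence in the case $\|\vct{z}+\vct{x}\|_2 \leq \|\vct{x}\|_2/6$, applying the first claim to $-\vct{z}$ controls $\|\eta(\vct{z})+\vct{x}\|_2$. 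The main obstacle will be the uniform concentration of $H(\vct{z})_{SS}$: since $H(\vct{z})$ is a cubic polynomial in the Gaussian entries, a direct matrix Bernstein invocation does not apply, so the net argument must carefully track joint tail control of $\|\vct{z}\|_2$, $\|\vct{z}\pm\vct{x}\|_2$, and bilinear Hanson--Wright-type bounds on the deviations at each net point.
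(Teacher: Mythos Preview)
Your high-level decomposition (triangle inequality into a thresholding remainder of size $\sqrt{k}\,\tau'$, a noiseless gradient-descent contraction, and a noise contribution) is exactly what the paper does. The divergence is in how you establish the contraction of the noiseless piece, and this is precisely the place where you yourself flag the main obstacle.

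You propose to control the operator norm $\|(I-(\mu/\phi^2)H(\vct{z}))_{SS}\|$ uniformly in $\vct{z}$ via an $\epsilon$-net over the $k$-dimensional ball together with matrix-Bernstein-type concentration at each net point. As you note, $H(\vct{z})$ is a cubic polynomial in the Gaussian entries, so neither matrix Bernstein nor the Lipschitz step of the net argument is routine; your claim that $m\gtrsim k\log p$ suffices for this is optimistic and unsupported. The paper sidesteps this entirely. Writing $\vct{h}=\vct{z}-\vct{x}$ and $\wt{\nabla f(\vct{z})}_S$ for the noiseless gradient, it expands the \emph{scalar}
\[
\Big\|\vct{h}-\tfrac{\mu}{\phi^2}\wt{\nabla f(\vct{z})}_S\Big\|_2^2
=\|\vct{h}\|_2^2-\tfrac{\mu}{\phi^2}\,T_{11}+\tfrac{\mu^2}{\phi^4}\,T_{12},
\]
and bounds $T_{11}$ from below and $T_{12}$ from above \emph{without any net over $\vct{z}$}. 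The leading term $\tfrac{2}{m}\sum_j(\vct{a}_{jS}'\vct{x})^2(\vct{a}_{jS}'\vct{h})^2$ in $T_{11}$ is controlled for all $\vct{h}$ supported on $S$ by a single spectral event, namely concentration of the fixed random matrix $\tfrac{1}{m}\sum_j|\vct{a}_{jS}'\vct{x}|^2\vct{a}_{jS}\vct{a}_{jS}'$ around $\|\vct{x}\|_2^2(\mtx{I}_p)_{SS}+2\vct{x}\vct{x}'$ (Lemma~\ref{lmm:concentration}). The higher-order remainders in $T_{11}$, the term $T_{12}$, and the signal part of $\tau(\vct{z})$ are all handled by H\"older's inequality, reducing everything to the deterministic operator norms $\|\mtx{A}_S\|_{2\to4}$ and $\|\mtx{A}_S\|_{2\to6}$; these are bounded once via Gordon's comparison inequality (Lemma~\ref{lmm:universal_upperbound}) as $\|\mtx{A}_S\|_{2\to q}\le (c_q m)^{1/q}+\sqrt{k}+t$, which is automatically uniform in direction. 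This is also where the sample-size requirement $m\ge Ck^2\log p$ enters naturally (one needs $\sqrt{k}\lesssim m^{1/4}$), rather than only in the threshold term as your writeup suggests. Likewise, instead of a uniform Bernstein bound for the noise gradient, the paper bounds the single operator norm $\|\sum_j\eps_j\vct{a}_{jS}\vct{a}_{jS}'\|$ (Lemma~\ref{lmm:concentration_noise}), which immediately yields $\|\nabla n(\vct{z})_S\|_2\le\|\vct{z}\|_2\cdot\|\sum_j\eps_j\vct{a}_{jS}\vct{a}_{jS}'\|/m$ for all $\vct{z}$ supported on $S$.

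In short: your plan is structurally right, but the net/matrix-Bernstein route you outline for $H(\vct{z})$ is the hard way and is left unresolved in your proposal. The paper's device---expand the squared norm, isolate a quadratic form governed by a fixed random matrix, and absorb the genuinely $\vct{z}$-dependent pieces through H\"older together with $\|\mtx{A}_S\|_{2\to 4}$, $\|\mtx{A}_S\|_{2\to 6}$---eliminates exactly the obstacle you identified.
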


\begin{proof}
For $\vct{z}$ supported on $S$, define
\[
\vct{u} = \eta(\vct{z}) = \mathcal{T}_{\frac{\mu}{\phi^2}\tau(\vct{z})} \left(\vct{z} - \frac{\mu}{\phi^2} \nabla f (\vct{z})_S\right) = \vct{z} - \frac{\mu}{\phi^2} \nabla f(\vct{z})_S + \frac{\mu}{\phi^2} \tau(\vct{z}) \vct{v},
\]
where $\vct{v} \in \mathbb{R}^p$, $\supp{\vct{v}} \subset S$ and $\|\vct{v}\|_\infty \leq 1$. \\

Since $\supp{\vct{z}}\subset S=\{1, \ldots, k\}$, we have
\begin{align}
\nabla f(\vct{z})_S = \frac{1}{m} \sum_{j=1}^m \left(|{\vct{a}_j}_S'\vct{z}|^2 - y_j\right)({\vct{a}_j}_S' \vct{z}){\vct{a}_j}_S.	
\end{align}
For convenience, let 
\begin{align}
\wt{\nabla f(\vct{z})}_S = \frac{1}{m} \sum_{j=1}^m \left(|{\vct{a}_j}_S'\vct{z}|^2 - |{\vct{a}_j}_S'\vct{x}|^2 \right)({\vct{a}_j}_S' \vct{z}){\vct{a}_j}_S,
\end{align}
and so
\begin{align}
\label{eq:T2-exp}
\nabla f(\vct{z})_S - \wt{\nabla f(\vct{z}) }_S = - \frac{1}{m}\sum_{j=1}^m \eps_j ({\vct{a}_j}_S' \vct{z}){\vct{a}_j}_S.
\end{align}
Denote $\vct{h} = \vct{z} - \vct{x} \in \mathbb{R}^p$, which implies $\supp{\vct{h}} \subset S$ and $\|\vct{h}\|_2\leq \|\vct{x}\|_2/6$. Straightforward calculation yields 
\begin{align}
\label{eq:onestep-1}
\|\vct{u} - \vct{x}\|_2 &\leq \left\|\vct{h} - \frac{\mu}{\phi^2} \wt{\nabla f(\vct{z})}_S\right\|_2+\frac{\mu}{\phi^2}\left\|\nabla f(\vct{z})_S - \wt{\nabla f(\vct{z}) }_S\right\|_2 + \frac{\mu \sqrt{k}}{\phi^2} \tau(\vct{z}) \nonumber
\\
 &:= T_1 + \frac{\mu}{\phi^2}T_2 + \frac{\mu \sqrt{k}}{\phi^2} \tau(\vct{z}).
\end{align}
It suffices to bound $T_1$, $T_2$ and $\tau(\vct{z})$.

%Notice that 
%\[
%T= \frac{1}{8}T_1^2 + \frac{\mu}{\phi^2} T_1T_2+ \frac{2\mu^2}{\phi^4} T_2^2,
%\]
%where
%\begin{align}
%T_1 & = \|(\vct{z} - \vct{x}) - 4\frac{\mu}{\phi^2} \wt{\nabla f(\vct{z})}_S\|,
%\label{eq:T1}\\
%T_2 & = \|\nabla f(\vct{z})_S - \wt{\nabla f(\vct{z}) }_S\|.
%\label{eq:T2}
%\end{align}
%Furthermore,
%\begin{align*}
%\tau^2(\vct{z}) &= \frac{\beta \log p}{m^2} \sum_{j=1}^m \left(|\vct{a}_j'\vct{z}|^2 - y_j\right)^2 |\vct{a}_j'\vct{z}|^2
%\\
%&= \frac{\beta \log p}{m^2} \sum_{j=1}^m \left(|\vct{a}_j'\vct{z}|^2 - |\vct{a}_j'\vct{x}|^2 - \epsilon_j\right)^2 |\vct{a}_j'\vct{z}|^2
%\\
%& \leq \frac{2\beta \log p}{m^2}  \sum_{j=1}^m \left(|\vct{a}_j'\vct{z}|^2 - |\vct{a}_j'\vct{x}|^2 \right)^2 |\vct{a}_j'\vct{z}|^2 +\frac{2\beta \log p}{m^2}\sum_{j=1}^m \epsilon_j^2 |\vct{a}_j'\vct{z}|^2
%\\
%&:= R_1 + R_2.
%\end{align*}
%If we , all the above quantities scale on $\|\vct{h}\|_2^2$, and hence $
%\mu$ can possibly be chosen as a numerical constant. 

\paragraph{Bound for $T_1$}
By simple algebra, we have
\begin{align}
\label{eq:T1-decompose}
T_1^2 &=\norm{\vct{h}}_2^2 - \frac{\mu}{\phi^2}\frac{1}{m} \sum_{j=1}^m \left(2({\vct{a}_j}_S'\vct{x})^2({\vct{a}_j}_S'\vct{h})^2 + 3 ({\vct{a}_j}_S'\vct{x})({\vct{a}_j}_S'\vct{h})^3 + ({\vct{a}_j}_S'\vct{h})^4\right)+ \frac{\mu^2}{\phi^4} \norm{\wt{\nabla f(\vct{z}) }_S}_2^2 \nonumber
\\	
&:= \norm{\vct{h}}_2^2 - \frac{\mu}{\phi^2} T_{11}  + \frac{\mu^2}{\phi^4} T_{12}.
\end{align}
In what follows, we derive lower bound for $T_{11}$ and upper bound for $T_{12}$ separately.\\

Notice that 
\[
T_{11}=\frac{1}{m} \sum_{j=1}^m \left(2 ({\vct{a}_j}_S'\vct{x})^2({\vct{a}_j}_S'\vct{h})^2 + 3 ({\vct{a}_j}_S'\vct{x})({\vct{a}_j}_S'\vct{h})^3 + ({\vct{a}_j}_S'\vct{h})^4\right).
\]
First, by Lemma \ref{lmm:concentration} with probability at least $1-1/m$, we have
\[
\frac{1}{m} \sum_{j=1}^m 2 ({\vct{a}_j}_S'\vct{x})^2({\vct{a}_j}_S'\vct{h})^2 \geq (2 - 2\delta)\left(2(\vct{x}'\vct{h})^2+\|\vct{x}\|_2^2\|\vct{h}\|_2^2\right).
\]
By Lemma \ref{lmm:universal_upperbound}, with probability at least $1 - 2/m$, we have
\begin{align*}
\frac{1}{m} \sum_{j=1}^m 3 ({\vct{a}_j}_S'\vct{x})({\vct{a}_j}_S'\vct{h})^3 &\leq  \frac{3}{m} \left(\sum_{j=1}^m ({\vct{a}_j}_S'\vct{x})^4\right)^{\frac{1}{4}}\left(\sum_{j=1}^m ({\vct{a}_j}_S'\vct{h})^4\right)^{\frac{3}{4}}
\\
& \leq \frac{3}{m}((3m)^{\frac{1}{4}}+k^{\frac{1}{2}}+\sqrt{2\log m})^4\|\vct{x}\|_2\|\vct{h}\|_2^3
\\
& \leq 10\|\vct{x}\|_2\|\vct{h}\|_2^3,
\end{align*}
provided $m \geq Ck^2$ for some sufficiently large numerical constant $C$. This implies
\[
T_{11} \geq (2- 2\delta)\|\vct{x}\|_2^2\|\vct{h}\|_2^2 - 10\|\vct{x}\|_2\|\vct{h}\|_2^3 \geq (1/3- 2\delta)\|\vct{x}\|_2^2\|\vct{h}\|_2^2.
\]
As to the upper bound for $T_{12}$, we can find $\|\vct{w}\|_2=1$, such that
\begin{align*}
T_{12}=\|\wt{\nabla f (\vct{z})}_S\|_2^2 &\leq \frac{2}{m^2}\left|\sum_{j=1}^m |{\vct{a}_j}_S' \vct{h}||{\vct{a}_j}_S' (2\vct{x}+\vct{h})||{\vct{a}_j}_S' (\vct{x}+\vct{h})||{\vct{a}_j}_S' \vct{w}|\right|^2.
\end{align*}
By Holder's inequality and Lemma \ref{lmm:universal_upperbound}, we have
\begin{align*}
T_{12} &\leq \frac{2}{m^2}\left(\sum_{j=1}^m |{\vct{a}_j}_S' \vct{h}|^4\right)^{\frac{1}{2}} \left(\sum_{j=1}^m |{\vct{a}_j}_S' (2\vct{x}+\vct{h})|^4\right)^{\frac{1}{2}} \left(\sum_{j=1}^m |{\vct{a}_j}_S' (\vct{x}+\vct{h})|^4\right)^{\frac{1}{2}} \left(\sum_{j=1}^m |{\vct{a}_j}_S' \vct{w}|^4\right)^{\frac{1}{2}}
\\
&\leq \frac{2}{m^2} ((3m)^{\frac{1}{4}}+k^{\frac{1}{2}}+\sqrt{2\log m})^8 \|\vct{h}\|_2^2\|2\vct{x}+\vct{h}\|_2^2\|\vct{x}+\vct{h}\|_2^2\|\vct{w}\|_2^2 \leq C_0 \|\vct{h}\|_2^2\|\vct{x}\|_2^4,
\end{align*}
provided $m \geq C k^2$, with sufficiently large constants $C_0$ and $C$. To summarize, with probability at least $1- 3/m$, 
\begin{align}
	\label{eq:T1-bd}
T_1^2 & \leq \norm{\vct{h}}_2^2 - \frac{\mu}{\phi^2}(1/3 - 2\delta)\norm{\vct{h}}_2^2\norm{\vct{x}}_2^2 + C_0\frac{\mu^2}{\phi^4}\norm{\vct{x}}_2^4 \norm{\vct{h}}_2^2.
\end{align}
By Lemma \ref{lmm:norm_estimation}, letting $\delta$ small enough, we have with probability at least $1 - 6/m$,
\[
T_1 \leq (1 - \mu/8)\norm{\vct{h}}_2,
\]
provided $\mu \leq \mu_0$ with sufficiently small absolute constant $\mu_0>0$.

\paragraph{Bound for $T_2$}
Note that 
\begin{align*}
T_{2} \leq \frac{7}{6m}\norm{\vct{x}}_2 \norm{\sum_{j=1}^m \eps_j \vct{a}_{jS}\vct{a}_{jS}'}.
\end{align*}
By Lemma \ref{lmm:noise} and Lemma \ref{lmm:concentration_noise}, with probability at least $1 - 2/m - 2e^{-k}$,  we have
\begin{align*}
\norm{\sum_{j=1}^m \epsilon_j \vct{a}_{jS} \vct{a}_{jS}'} \leq C_0 \sigma \sqrt{m(k+\log m)}
\end{align*}
provided $m/\log m \geq k$. In summary, by Lemma \ref{lmm:norm_estimation}, we have that with probability at least $1 - 5/m - 2e^{-k}$,
\begin{align*}
\frac{\mu}{\phi^2}T_{2} \leq C_0 \mu \frac{\sigma}{\norm{\vct{x}}_2}\sqrt{\frac{k + \log m}{m}}.
\end{align*}

%Observe that $\norm{\vct{h}} \leq C$.
%Together with \eqref{eq:T21-bd}, the last display leads to
%\begin{align}
%	\label{eq:T2-bd}
%T_2 \leq C  \sigma \log m\sqrt{\frac{k+\log m}{m}}.
%\end{align}

\paragraph{Bound for $\tau(\vct{z})$}
By simple algebra, 
\begin{align*}
\tau^2(\vct{z}) &= \frac{\beta \log p}{m^2} \sum_{j=1}^m \left(({\vct{a}_j}_S' \vct{h}){\vct{a}_j}_S' (2\vct{x}+\vct{h}) - \epsilon_j\right)^2|{\vct{a}_j}_S' (\vct{x}+\vct{h})|^2
\\
&\leq  \frac{2\beta \log p}{m^2} \left\{\sum_{j=1}^m |{\vct{a}_j}_S' \vct{h}|^2|{\vct{a}_j}_S' (2\vct{x}+\vct{h})|^2|{\vct{a}_j}_S' (\vct{x}+\vct{h})|^2 + \sum_{j=1}^m \epsilon_j^2|{\vct{a}_j}_S' (\vct{x}+\vct{h})|^2\right\}
\\
&:=\frac{2\beta \log p}{m^2}(\mathcal{T}_1 + \mathcal{T}_2).
\end{align*}
%\subsubsection{Bound for $\mathcal{T}_1$}
By Holder's inequality and Lemma \ref{lmm:universal_upperbound}, with probability at least $1 - 2/m$, we have
\begin{align*}
\mathcal{T}_1 &\leq \left(\sum_{j=1}^m |{\vct{a}_j}_S' \vct{h}|^6\right)^{\frac{1}{3}} \left(\sum_{j=1}^m |{\vct{a}_j}_S' (2\vct{x}+\vct{h})|^6\right)^{\frac{1}{3}} \left(\sum_{j=1}^m |{\vct{a}_j}_S' (\vct{x}+\vct{h})|^6\right)^{\frac{1}{3}}
\\
&\leq C_0 \|\mtx{A}_S\|_{2 \rightarrow 6}^6 \|\vct{h}\|_2^2 \|\vct{x} \|_2^4 \leq C_0(m + k^3)\|\vct{h}\|_2^2\|\vct{x}\|_2^4
\end{align*}
for some numerical constant $C_0$. By Lemma \ref{lmm:noise} and Lemma \ref{lmm:concentration_noise}, with probability at least $1 - 2/m - 2e^{-k}$, we have, 
\[
\mathcal{T}_2 \leq\frac{49}{36} \|\vct{x}\|_2^2 \norm{\sum_{j=1}^m \epsilon_j^2 \vct{a}_{jS} \vct{a}_{jS}'}  \leq C_0 m \sigma^2 \|\vct{x}\|_2^2,
\]
for some numerical constant $C_0$, provided $\frac{m}{\log^2 m}\geq k$. In summary, 
\begin{align}
\label{eq:tau-bd}
\frac{\mu}{\phi^2}\sqrt{k}\tau \leq C_0 \mu \left(\frac{\sqrt{(mk+k^4)\log p}}{m}\|\vct{h}\|_2 + \frac{\sigma}{\|\vct{x}\|_2}\sqrt{\frac{k \log p}{m}}\right) \leq \frac{\mu \|\vct{h}\|_2}{16} + C_0 \frac{\mu \sigma}{\|\vct{x}\|_2}\sqrt{\frac{k \log p}{m}},
\end{align}
provided $m \geq C \max(k \log p, k^2\sqrt{\log p})$.

\paragraph{Summary}
We can guarantee that, with probability at least $1 - \frac{15}{m} - 4e^{-k}$,
\begin{align}
\frac{\norm{\vct{u} - \vct{x}}_2}{\|\vct{x}\|_2} \leq \left(1-\frac{\mu}{16}\right)\frac{\norm{\vct{z} - \vct{x}}_2}{\|\vct{x}\|_2} + C_0 \mu \sqrt{\frac{k \log p}{m}}\frac{\sigma}{\|\vct{x}\|_2^2},
\end{align}
for some absolute constant $C_0 > 0$, provided $m \geq C k^2 \log (mp)$ and $\mu \leq \mu_0$.
\end{proof}

Suppose $E_0$ is the intersection of the events $E_{01}$ and $E_{02}$ described by Lemmas \ref{thm:initialization} and \ref{thm:contraction}, respectively. Then we have 
\[
\P(E_0)\geq 1 - \frac{46}{m} - 10e^{-k}.
\] 
The following induction argument guarantees the effectiveness of thresholded Wirtinger flow:
\begin{lemma}
\label{lmm:induction}
Let $\beta=4$ and $\widehat{\vct{x}}^{(n)}, n=0,1,2,\ldots$ are defined iteratively by \eqref{eq:x-init} and \eqref{eq:iteration}. For fixed $n\geq 0$, assume that there exists a random vector $\vct{x}^{(n)}$ satisfying $\vct{x}^{(n)} \indep \mtx{A}_{S^c}$ and $\supp{\vct{x}^{(n)}} \subset S$, and that on an event $E_n \subset E_0$ we have $\widehat{\vct{x}}^{(n)} = \vct{x}^{(n)}$ and $\min\limits_{i=0,1}\|\widehat{\vct{x}}^{(n)} - (-1)^i\vct{x}\|_2 \leq \frac{1}{6} {\|\vct{x}\|_2}$. Then there exists a random vector $\vct{x}^{(n+1)}$ satisfying $\vct{x}^{(n+1)} \indep \mtx{A}_{S^c}$ and $\supp{\vct{x}^{(n+1)}} \subset S$, and on an event $E_{n+1} \subset E_n$ satisfying $\P(E_n/E_{n+1})\leq 1 - \frac{1}{m^2p}$, we have $\widehat{\vct{x}}^{(n+1)} = \vct{x}^{(n+1)}$ and 
\[
{\min\limits_{i=0,1}\|\widehat{\vct{x}}^{(n+1)} - (-1)^i\vct{x}\|_2}\leq \left(1-\frac{\mu}{16}\right) {\min\limits_{i=0,1}\|\widehat{\vct{x}}^{(n)} - (-1)^i\vct{x}\|_2} + C_0\frac{\mu \sigma}{\|\vct{x}\|_2}  \sqrt{\frac{k\log p}{m}} \leq \frac{1}{6}\|\vct{x}\|_2,
\]
provided $m\geq C \left(1+\frac{\sigma^2}{\|\vct{x}\|_2^4}\right)k^2 \log (mp)$ for sufficiently large $C$.
\end{lemma}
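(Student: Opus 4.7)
The plan is to propagate three invariants from iteration $n$ to iteration $n+1$: (i) an ``oracle'' surrogate $\vct{x}^{(n+1)}$ that is independent of $\mtx{A}_{S^c}$ and supported on $S$, (ii) agreement with the algorithmic iterate $\widehat{\vct{x}}^{(n+1)}$ on a high-probability event, and (iii) the contractive error bound. First I would define $\vct{x}^{(n+1)} := \eta(\vct{x}^{(n)})$, where $\eta$ is the ``restricted-to-$S$'' thresholded gradient map already analyzed in Lemma~\ref{thm:contraction}. Since $\vct{x}^{(n)}$ is supported on $S$ and $\vct{x}^{(n)} \indep \mtx{A}_{S^c}$ by the inductive hypothesis, and since $\eta(\vct{x}^{(n)})$ depends only on $\vct{x}^{(n)}$, $\mtx{A}_S$, $\vct{y}$, and $\phi$ (the last two of which are independent of $\mtx{A}_{S^c}$ by Lemma~\ref{lmm:independence}), the invariants $\vct{x}^{(n+1)}\indep \mtx{A}_{S^c}$ and $\supp{\vct{x}^{(n+1)}}\subset S$ follow immediately.

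Next I would couple $\widehat{\vct{x}}^{(n+1)}$ and $\vct{x}^{(n+1)}$. On $E_n$ we have $\widehat{\vct{x}}^{(n)}=\vct{x}^{(n)}$, so the two iterates agree on the coordinates in $S$. For $l\in S^c$, since $(\vct{x}^{(n)})_l=0$, the defining property \eqref{eq:thr} of $\mathcal{T}$ ensures the thresholding step sets the $l$-th coordinate of $\widehat{\vct{x}}^{(n+1)}$ to zero as soon as $|\nabla f(\vct{x}^{(n)})_l|\leq \tau(\vct{x}^{(n)})$. Defining $G_n := \{|\nabla f(\vct{x}^{(n)})_l|\leq \tau(\vct{x}^{(n)}) \text{ for all } l\in S^c\}$ and $E_{n+1}:=E_n\cap G_n$, the desired identity $\widehat{\vct{x}}^{(n+1)}=\vct{x}^{(n+1)}$ holds on $E_{n+1}$.

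The core calculation is bounding $\P(G_n^c)$. Condition on the $\sigma$-algebra $\mathcal{F}:=\sigma(\mtx{A}_S,\vct{y},\vct{x}^{(n)})$, under which the coefficients $c_j := (|\vct{a}_{jS}'\vct{x}^{(n)}|^2 - y_j)(\vct{a}_{jS}'\vct{x}^{(n)})$ and $\tau(\vct{x}^{(n)})$ are deterministic; crucially, the entries $\{a_{jl}: l\in S^c\}$ are i.i.d.\ standard Gaussian \emph{independent} of $\mathcal{F}$ (this is exactly where the inductive invariant $\vct{x}^{(n)}\indep \mtx{A}_{S^c}$ is used). Hence, conditionally,
\[
\nabla f(\vct{x}^{(n)})_l \mid \mathcal{F} \sim \mathcal{N}\!\left(0,\ \tfrac{1}{m^2}\sum_{j=1}^m c_j^2\right),
\]
and by construction $\tau(\vct{x}^{(n)})^2 = \beta\log(mp)\cdot \tfrac{1}{m^2}\sum_j c_j^2$ is exactly $\beta\log(mp)$ times this variance. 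The standard Gaussian tail bound plus a union bound over $|S^c|\leq p$ coordinates then yields $\P(G_n^c\mid\mathcal{F}) \leq 2p\,(mp)^{-\beta/2}$, which for $\beta=4$ gives the asserted per-step probability loss $\lesssim 1/(m^2 p)$ (matching the stated bound up to an absolute constant).

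Finally, since $E_{n+1}\subset E_n\subset E_0\subset E_{02}$ and $\vct{x}^{(n)}$ satisfies the hypotheses of Lemma~\ref{thm:contraction} by the inductive assumption, applying that lemma directly to $\vct{x}^{(n)}$ yields the contraction
\[
\min_{i=0,1}\|\vct{x}^{(n+1)} - (-1)^i\vct{x}\|_2 \leq \Big(1-\tfrac{\mu}{16}\Big)\min_{i=0,1}\|\vct{x}^{(n)} - (-1)^i\vct{x}\|_2 + C_0\frac{\mu\sigma}{\|\vct{x}\|_2}\sqrt{\frac{k\log p}{m}}.
\]
Under $m\gtrsim (1+\sigma^2/\|\vct{x}\|_2^4)k^2\log(mp)$, the additive noise term is dominated by $\tfrac{\mu}{96}\|\vct{x}\|_2$, so the right-hand side stays below $\tfrac{1}{6}\|\vct{x}\|_2$, closing the induction. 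The main obstacle is Step~3: preserving the independence of $\vct{x}^{(n)}$ from $\mtx{A}_{S^c}$ across iterations, so that the threshold $\tau(\vct{x}^{(n)})$ can be treated as $\mathcal{F}$-measurable and the universal-threshold Gaussian tail argument remains valid. Without this invariant, $\tau$ and $\nabla f(\vct{x}^{(n)})_l$ would share randomness in $\mtx{A}_{S^c}$ and the clean conditional-Gaussian calculation would fail, which is precisely why the lemma carries the oracle surrogate $\vct{x}^{(n+1)}$ alongside the algorithmic iterate.
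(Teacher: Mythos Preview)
Your proposal is correct and follows essentially the same route as the paper: define the oracle surrogate $\vct{x}^{(n+1)}=\eta(\vct{x}^{(n)})$, verify its support and independence properties via Lemma~\ref{lmm:independence}, use the conditional Gaussianity of $\nabla f(\vct{x}^{(n)})_l$ for $l\in S^c$ together with the choice $\beta=4$ to bound $\P(E_n\setminus E_{n+1})$ by a union bound, and then invoke Lemma~\ref{thm:contraction} on $E_{n+1}\subset E_{02}$ for the contraction. Your explicit conditioning on $\mathcal{F}=\sigma(\mtx{A}_S,\vct{y},\vct{x}^{(n)})$ and your emphasis on why the independence invariant is the crux of the argument are exactly what the paper's proof does (more tersely).
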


\begin{proof}
The improved estimation is defined as
\[
\widehat{\vct{x}}^{(n+1)} = \mathcal{T}_{\frac{\mu}{\phi^2}\tau(\widehat{\vct{x}}^{(n)})} \left(\widehat{\vct{x}}^{(n)} - \frac{\mu}{\phi^2} \nabla f (\widehat{\vct{x}}^{(n)})\right).
\]
where $\mathcal{T}_\tau$ is the soft-thresholding operator. We now define
\[
\vct{x}^{(n+1)} :=\eta(\vct{x}^{(n)}) = \mathcal{T}_{\frac{\mu}{\phi^2}\tau(\vct{x}^{(n)})} \left(\vct{x}^{(n)} - \frac{\mu}{\phi^2} \nabla f(\vct{x}^{(n)})_S\right).
\]
By the definition of $\nabla f$, $\tau$ and $\phi$, as well as the assumption that $\vct{x}^{(n)} \indep \mtx{A}_{S^c} \text{~and~}\supp{\vct{x}^{(n)}} \subset S$, we can prove $\supp{\vct{x}^{(n+1)}} \subset S$ as well as $\vct{x}^{(n+1)} \indep \mtx{A}_{S^c}$. In fact, by the definition \eqref{eq:threshold}, we know if $\vct{x}^{(n)}$ is supported on $S$ and independent of $\mtx{A}_{S^c}$, then $\tau(\vct{x}^{(n)})$ is independent of $\mtx{A}_{S^c}$. Moreover, by the definition of the gradient \eqref{eq:gradient}, we know $ \left(\nabla f (\vct{x}^{(n)})\right)_S$ is supported on $S$ and independent of $\mtx{A}_{S^c}$. The assertion is established by the obvious fact $\phi \indep \mtx{A}_{S^c}$ shown in Lemma \ref{lmm:independence}.

In the following, we will construct $E_{n+1} \subset E_n$ such that $\widehat{\vct{x}}^{(n+1)}=\vct{x}^{(n+1)}$ on $E_{n+1}$. For any $i=k+1, k+2, \ldots, p$, with probability $1 - \frac{1}{m^2p^2}$,
\begin{align*}
\left|\frac{\partial}{\partial z_i} f (\vct{x}^{(n)})\right|&=\left|\frac{1}{m} \sum_{j=1}^m \left(|{\vct{a}_j}'\vct{x}^{(n)}|^2 - y_j\right)({\vct{a}_j}' \vct{x}^{(n)})(\vct{a}_j)_i\right|
\\
& \leq \frac{\sqrt{4 \log (mp)}}{m}\sqrt{\sum_{j=1}^m \left(|{\vct{a}_j}'\vct{x}^{(n)}|^2 - y_j\right)^2|{\vct{a}_j}' \vct{x}^{(n)}|^2}
\\
& \leq \tau(\vct{x}^{(n)}).
\end{align*}
The first inequality is due to $\supp{\vct{x}^{(n)}} \subset S$ and $\vct{x}^{(n)} \indep \mtx{A}_{S^c}$, and the second inequality is due to $\beta = 4$. Then with probability at least $1 - \frac{1}{m^2 p}$,
\[
\max_{k+1 \leq i \leq p}\left|\frac{\partial}{\partial z_i} f (\vct{x}^{(n)})\right| \leq \tau(\vct{x}^{(n)}),
\]
which implies
\[
\mathcal{T}_{\frac{\mu}{\phi^2}\tau(\vct{x}^{(n)})} \left(\vct{x}^{(n)} - \frac{\mu}{\phi^2} \nabla f(\vct{x}^{(n)})\right)= \mathcal{T}_{\frac{\mu}{\phi^2}\tau(\vct{x}^{(n)})} \left(\vct{x}^{(n)} - \frac{\mu}{\phi^2} \nabla f(\vct{x}^{(n)})_S\right).
\]
Notice that on the event $E_n$, we have $\widehat{\vct{x}}^{(n)} = \vct{x}^{(n)}$, and hence
\[
\widehat{\vct{x}}^{(n+1)} = \mathcal{T}_{\frac{\mu}{\phi^2}\tau(\vct{x}^{(n)})} \left(\vct{x}^{(n)} - \frac{\mu}{\phi^2} \nabla f (\vct{x}^{(n)})\right).
\]
Then there exists $E_{n+1} \subset E_n$, such that $\P(E_n/E_{n+1}) \leq \frac{1}{m^2 p}$, and 
\[
\widehat{\vct{x}}^{(n+1)} = \mathcal{T}_{\frac{\mu}{\phi^2}\tau(\vct{x}^{(n)})} \left(\vct{x}^{(n)} - \frac{\mu}{\phi^2} \nabla f(\vct{x}^{(n)})_S\right) = \vct{x}^{(n+1)}.
\]
By the assumption, we have
\[
{\min(\|\vct{x}^{(n)} - \vct{x}\|_2, \|\vct{x}^{(n)} + \vct{x}\|_2)} \leq \frac{1}{6}{\|\vct{x}\|_2} \text{~on~} E_n.
\]
Since $E_n \subset E_0$ and $\vct{x}^{(n+1)} =\eta(\vct{x}^{(n)})$, by Lemma \ref{thm:contraction}, we have
\begin{align*}
&{\min(\|\vct{x}^{(n+1)} - \vct{x}\|_2, \|\vct{x}^{(n+1)} + \vct{x}\|_2)} 
\\
&\leq \left(1-\frac{\mu}{16}\right){\min(\|\vct{x}^{(n)} - \vct{x}\|_2, \|\vct{x}^{(n)} + \vct{x}\|_2)} + C_0\frac{\mu \sigma}{\|\vct{x}\|_2}  \sqrt{\frac{k\log p}{m}}  \leq \frac{1}{6}\|\vct{x}\|_2  \text{~on~}E_n,
\end{align*}
provided $m \geq C (\sigma^2/\|\vct{x}\|_2^4) k\log p$ for a sufficiently large absolute constant $C$. Since $E_{n+1} \subset E_n$, and $\widehat{\vct{x}}^{(n+1)} = \vct{x}^{(n+1)}$ on $E_{n+1}$, we have
\begin{align*}
{\min\limits_{i=0,1}\|\widehat{\vct{x}}^{(n+1)} - (-1)^i\vct{x}\|_2}\leq \left(1-\frac{\mu}{16}\right) {\min\limits_{i=0,1}\|\widehat{\vct{x}}^{(n)} - (-1)^i\vct{x}\|_2} + C_0\frac{\mu \sigma}{\|\vct{x}\|_2}  \sqrt{\frac{k\log p}{m}} \leq \frac{1}{6} \|\vct{x}\|_2\text{~on~}E_{n+1}.
\end{align*}
\end{proof}

Theorem \ref{thm:upper} can be directly implied by Lemma \ref{lmm:induction}. In fact, by Lemma \ref{thm:initialization}, we know the initial condition in \ref{lmm:induction} holds. For all $t=1, 2, 3, \ldots$, straight forward calculation yields
\[
\frac{\min(\|\widehat{\vct{x}}^{(t)} - \vct{x}\|_2, \|\widehat{\vct{x}}^{(t)} + \vct{x}\|_2)}{\|\vct{x}\|_2} \leq \frac{1}{6}\left(1 - \frac{\mu}{16}\right)^t + C_0\frac{\sigma}{\|\vct{x}\|_2^2}\sqrt{\frac{k \log p}{m}} \text{~on~} E_t
\]
for some universal constant $C_0$, where $\P(E_t) \geq 1 - \frac{46}{m} - 10e^{-k} - \frac{t}{mp^2}$. 

\appendix
%%%%%%%%%%%%%%%%%%%%%%%%%%%%%%%%%%%
\section{Preliminaries and supporting lemmas}
%%%%%%%%%%%%%%%%%%%%%%%%%%%%%%%%%%%

\begin{lemma}(\cite{Bentkus03})
\label{lmm:one-sided-tail}
Suppose $X_1,\dots, X_m$ are i.i.d.~real-valued random variables obeying $X_i\leq b$ for some absolute constant $b>0$, $\E X_i = 0$ and $\E X_i^2 = v^2$. Setting $\sigma^2 = m (b^2 \vee v^2)$,
\begin{align*}
\P \sth{X_1+\cdots + X_m \geq y} \leq \exp\pth{- {y^2\over 2\sigma^2}} \wedge c_0(1 - \Phi(y/\sigma))
\end{align*}
where one can take $c_0 = 25$.
\end{lemma}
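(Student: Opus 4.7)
The plan is to establish the two pieces of the minimum bound separately and combine them. Throughout, write $S_m = X_1 + \cdots + X_m$ and denote by $Z \sim N(0, \sigma^2)$ a reference Gaussian.

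For the Gaussian-type exponential bound $\P(S_m \geq y) \leq \exp(-y^2/(2\sigma^2))$, the natural route is the Chernoff method combined with a tight moment-generating function estimate for one-sided bounded variables. Specifically, for $\lambda > 0$ and any $X$ with $X \leq b$, $\E X = 0$, $\E X^2 = v^2$, one has the Bennett-type inequality $\E e^{\lambda X} \leq \exp(v^2 \phi(\lambda b)/b^2)$ where $\phi(u) = e^u - 1 - u$. Applying this to each $X_i$ and optimizing $\lambda$ in $\P(S_m \geq y) \leq e^{-\lambda y} (\E e^{\lambda X_1})^m$ yields the classical Bennett inequality, which simplifies to a sub-Gaussian bound of the form $\exp(-y^2/(2(mv^2 + cby)))$. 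The definition $\sigma^2 = m(b^2 \vee v^2)$ is designed precisely so that this Bennett expression is controlled by $\exp(-y^2/(2\sigma^2))$: when $v^2 \geq b^2$ the leading $mv^2$ term dominates in the relevant regime of $y$, and when $v^2 < b^2$, replacing the variance proxy by $mb^2$ absorbs the $by$ correction term. This computation is routine and I would not grind through it in detail.

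For the Gaussian-comparison bound $\P(S_m \geq y) \leq c_0(1 - \Phi(y/\sigma))$, I would follow the strategy Bentkus introduced for smooth bounded test functions. The idea is to approximate the indicator $\mathbf{1}_{[y, \infty)}$ by a smooth function $f$ and to compare $\E f(S_m)$ to $\E f(Z)$ via a Lindeberg-type exchange, swapping one $X_i$ at a time for an independent Gaussian of matching variance. Controlling the exchange error requires third-moment-type bounds, which are available because $X_i \leq b$ and $\E X_i^2 = v^2$ give $\E|X_i|^3 \leq (b \vee v) \cdot v^2$. One then has to choose the smoothing scale so that the truncation error from replacing the indicator by $f$ is comparable to the Lindeberg error; Bentkus's key innovation is to use hyperplane-style smoothing that yields a constant independent of dimension, at the cost of an absolute multiplicative factor on the Gaussian tail. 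The numerical value $c_0 = 25$ is not meant to be sharp, it is what falls out of explicit book-keeping in the exchange argument.

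The main obstacle is clearly the second bound with a \emph{single explicit} constant. Standard Berry-Esseen type arguments give $|\P(S_m \geq y) - \P(Z \geq y)| \leq C_1 \rho / \sigma^3$ with $\rho = \sum \E|X_i|^3$, but this additive error is useless in the far tail where $\P(Z \geq y)$ is exponentially small. Bentkus's contribution is to turn this into a multiplicative bound by balancing the two regimes: for moderate $y/\sigma$ (say $y \leq M \sigma$), the Berry-Esseen comparison together with $\rho/\sigma^3 = O(1/\sqrt{m})$ shows $\P(S_m \geq y) \leq C_2 (1-\Phi(y/\sigma))$ with an explicit constant; for large $y/\sigma$, the already-established Chernoff bound $\exp(-y^2/(2\sigma^2))$ is comparable up to a constant factor to $1-\Phi(y/\sigma)$, using the Mills-ratio lower bound $1-\Phi(t) \geq \phi(t)/(t+1/t)$. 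Taking the minimum over the two regimes and tracking constants through the Lindeberg exchange produces the claimed $c_0 = 25$. Because this result is being cited verbatim from Bentkus (2003), in the paper I would simply record the bound and refer to that source rather than reproduce the optimized constants.
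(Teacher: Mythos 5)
The paper does not prove this lemma at all: it is quoted as an external result from Bentkus (2003) and used as a black box, so your final decision to record the bound and cite the source is exactly what the paper does. On that level the proposal is fine.

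However, the proof sketch you offer in support would not survive if one actually tried to carry it out, on two counts. First, the sub-Gaussian piece does not follow from Bennett/Bernstein. Bennett's exponent is $\frac{mv^2}{b^2}h\bigl(\frac{by}{mv^2}\bigr)$ with $h(u)=(1+u)\log(1+u)-u$, and since $h(u)<u^2/2$ for every $u>0$, this exponent is \emph{strictly smaller} than $\frac{y^2}{2mv^2}$; in the case $v\geq b$, where $\sigma^2=mv^2$ with no inflation at all, there is no slack for the ``$b^2\vee v^2$'' definition to absorb, so the claim that the Bennett/Bernstein form $\exp\bigl(-y^2/(2(mv^2+cby))\bigr)$ is controlled by $\exp(-y^2/(2\sigma^2))$ is simply false for all $y>0$ in that regime (and fails for large $y$ when $v<b$ as well). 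The first bound genuinely requires the sharp Hoeffding--Bentkus analysis of variables bounded from one side, not a generic MGF bound. Second, the two-regime gluing for the Gaussian-comparison bound leaves the far tail uncovered, which is precisely where the statement has content. Berry--Esseen gives only an \emph{additive} error of order $\rho/\sigma^3\asymp m^{-1/2}$, hence a multiplicative constant only while $1-\Phi(y/\sigma)\gtrsim m^{-1/2}$, i.e.\ $y/\sigma\lesssim\sqrt{\log m}$; on the other side, $\exp(-y^2/(2\sigma^2))\leq 25\,(1-\Phi(y/\sigma))$ holds only for $y/\sigma$ below an absolute constant (about $10$), because the Chernoff bound misses the Mills-ratio factor $\sigma/y$, and for larger $y/\sigma$ the inequality reverses. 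So neither ingredient yields $c_0(1-\Phi(y/\sigma))$ deep in the tail, and no choice of constant fixes this. Bentkus's actual argument is not a smoothed-indicator Lindeberg exchange; it compares functionals of the form $\E(S_m-t)_+^2$ of the sum with those of the extremal two-point (binomial) sum and then bounds the binomial tail sharply. That missing idea is the heart of the lemma, and it is why citing Bentkus, as the paper does, is the right call rather than reconstructing the bound from Bennett plus Berry--Esseen.
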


\begin{lemma} (Proposition 34 \cite{vershyninNARMT})
\label{lmm:gaussian_concentration}
Suppose that $\vct{x} \sim \mathcal{N}(0, \mtx{I}_n)$ is a standard normal random vector, and $f: \mathbb{R}^n \rightarrow \mathbb{R}$ is a $1$-Lipschitz function. Then
\[
\P(f(\vct{x}) - \E f(\vct{x}) \geq t)\leq e^{-\frac{t^2}{2}}.
\]
\end{lemma}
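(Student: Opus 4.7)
The plan is to prove this Gaussian concentration inequality (the Borell--Tsirelson--Ibragimov--Sudakov bound) via the Herbst argument combined with the Gaussian logarithmic Sobolev inequality. The strategy has three stages: reduce to smooth $f$, establish log-Sobolev plus Herbst's entropy-to-mgf bound, then finish by a Chernoff-style optimization.

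First, I would reduce to the case of a smooth, bounded, $1$-Lipschitz $f$ by convolving with a standard mollifier; the mollified approximations remain $1$-Lipschitz with $\norm{\nabla f}_2\leq 1$ pointwise, and the tail bound for general $f$ follows by passing to the limit. Under this reduction, all derivative manipulations below are rigorously justified.

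Second, I would invoke the Gaussian log-Sobolev inequality: for every smooth $g:\mathbb{R}^n\to\mathbb{R}$,
\begin{equation}
\label{eq:LSI-plan}
\mathrm{Ent}(g^2) \;:=\; \E\qth{g(\vct{x})^2 \log g(\vct{x})^2} - \E\qth{g(\vct{x})^2}\,\log \E\qth{g(\vct{x})^2} \;\leq\; 2\,\E\norm{\nabla g(\vct{x})}_2^2,
\end{equation}
where $\vct{x}\sim\mathcal{N}(\vct{0},\mtx{I}_n)$. This inequality is dimension-free, which is essential for the clean constant in the conclusion. A standard route is to prove the one-dimensional case (Gross, 1975) via Hermite expansions or via the Ornstein--Uhlenbeck semigroup, and then tensorize to $\mathbb{R}^n$ using the sub-additivity of entropy for product measures.

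Third, I would run the Herbst argument. Set $H(\lambda):=\E e^{\lambda f(\vct{x})}$ and apply \eqref{eq:LSI-plan} to $g=e^{\lambda f/2}$: since $\norm{\nabla g}_2^2 = \tfrac{\lambda^2}{4} e^{\lambda f}\norm{\nabla f}_2^2 \leq \tfrac{\lambda^2}{4} e^{\lambda f}$, \eqref{eq:LSI-plan} becomes $\lambda H'(\lambda)-H(\lambda)\log H(\lambda) \leq \tfrac{\lambda^2}{2} H(\lambda)$. Dividing by $\lambda^2 H(\lambda)$, the function $\psi(\lambda):=\tfrac{1}{\lambda}\log H(\lambda)$ satisfies $\psi'(\lambda)\leq \tfrac{1}{2}$, and since $\psi(\lambda)\to \E f$ as $\lambda\to 0^+$, integration yields $\log \E e^{\lambda(f-\E f)} \leq \lambda^2/2$ for all $\lambda>0$. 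Combining with Markov's inequality gives $\P(f(\vct{x})-\E f(\vct{x})\geq t) \leq e^{-\lambda t + \lambda^2/2}$, and optimizing over $\lambda>0$ at $\lambda=t$ produces the advertised bound $e^{-t^2/2}$.

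The hard part is the log-Sobolev inequality \eqref{eq:LSI-plan} itself: every route requires genuine analytic input (Nelson's hypercontractivity for the Ornstein--Uhlenbeck semigroup, Gross's Hermite-polynomial spectral argument, or a Clark--Ocone stochastic-calculus proof). The subtlety is that one needs the \emph{sharp} constant $2$ with \emph{no} dimension dependence; a weaker constant would spoil the exponent in the final tail bound, while a dimension-dependent constant would destroy the high-dimensional applicability. Once \eqref{eq:LSI-plan} is in hand, the remaining Herbst step and Chernoff optimization are routine one-line calculations.
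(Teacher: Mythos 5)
Your argument is correct: the log-Sobolev/Herbst route you outline does yield exactly the stated bound, since $\mathrm{Ent}(e^{\lambda f})\leq \tfrac{\lambda^2}{2}\E e^{\lambda f}$ for $1$-Lipschitz $f$ integrates to $\log \E e^{\lambda(f-\E f)}\leq \lambda^2/2$, and Chernoff at $\lambda=t$ gives $e^{-t^2/2}$ for $t\geq 0$ (the inequality is of course only meant for $t\geq 0$). The comparison with the paper is simple: the paper does not prove this lemma at all — it is imported verbatim as Proposition~34 of \cite{vershyninNARMT}, where it is in turn a classical fact usually derived from the Gaussian isoperimetric inequality (Borell--Tsirelson--Ibragimov--Sudakov) or, as you do, from Gross's logarithmic Sobolev inequality via Herbst. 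So you are supplying a self-contained proof of a black-box ingredient; your route has the advantage of avoiding isoperimetry and of making transparent why the constant is dimension-free and sharp in the exponent, at the cost of needing the one-dimensional log-Sobolev inequality plus tensorization as genuine input. Two small points of hygiene: a $1$-Lipschitz function on $\R^n$ cannot be made bounded by mollification alone, so either drop ``bounded'' (the Herbst differentiation is justified because $|f(\vct{x})|\leq |f(\vct{0})|+\|\vct{x}\|_2$ makes $\E e^{\lambda f}$ finite for all $\lambda$) or truncate with a $1$-Lipschitz cutoff and pass to the limit; and in the Herbst step one should note $\psi$ extends continuously to $\lambda=0^+$ with value $\E f$ before integrating $\psi'\leq \tfrac12$. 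Neither affects correctness.
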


\begin{lemma} (Proposition 33 \cite{vershyninNARMT})
\label{lmm:slepian}
Consider two centered Gaussian processes $(X_t)_{t \in T}$ and $(Y_t)_{t \in T}$ whose increments satisfy the inequality
\[
\E |X_s - X_t|^2 \leq \E |Y_s - Y_t|^2
\]
for all $s, t \in T$. Then
\[
\E \sup_{t \in T} X_t \leq \E \sup_{t \in T} Y_t.
\]
\end{lemma}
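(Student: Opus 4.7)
The plan is to establish the Sudakov--Fernique type comparison by a Gaussian interpolation argument. First I would reduce to the case that $T$ is finite: if $T = \{1,\dots,n\}$, both suprema are genuine maxima of finite Gaussian families; the general case follows by taking an increasing sequence of finite subsets and using monotone convergence, assuming separability of the processes (which is standard and can be assumed WLOG since only second moments enter).

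Next I would introduce an independent copy: realize $X = (X_1,\dots,X_n)$ and $Y = (Y_1,\dots,Y_n)$ as independent centered Gaussian vectors on a common probability space, and define the interpolation
\[
Z_t(u) := \sqrt{u}\, Y_t + \sqrt{1-u}\, X_t, \qquad u \in [0,1],
\]
so that $Z(0) \stackrel{d}{=} X$ and $Z(1) \stackrel{d}{=} Y$. Because $\max$ is not smooth, replace it by the log-sum-exp surrogate
\[
F_\beta(z) := \tfrac{1}{\beta} \log \sum_{t=1}^n \exp(\beta z_t),
\]
which is $C^\infty$, convex, and satisfies $F_\beta(z) \to \max_t z_t$ uniformly on compacta as $\beta \to \infty$. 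The task reduces to showing $\E F_\beta(X) \le \E F_\beta(Y)$, after which I would let $\beta \to \infty$ and invoke dominated convergence (justified by $|F_\beta(z) - \max_t z_t| \le \beta^{-1}\log n$).

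The core computation is $\frac{d}{du} \E F_\beta(Z(u)) \ge 0$. Differentiating under the expectation,
\[
\frac{d}{du} \E F_\beta(Z(u)) = \sum_{t=1}^n \E\!\left[ \partial_t F_\beta(Z(u)) \cdot \tfrac{1}{2}\!\left( \tfrac{1}{\sqrt{u}} Y_t - \tfrac{1}{\sqrt{1-u}} X_t \right) \right],
\]
and I would apply Gaussian integration by parts (Stein's identity: $\E[G \cdot h(G,H)] = \sum_k \E[G G_k] \E[\partial_{G_k} h]$ for jointly Gaussian $G,H$) separately to the $Y$-part and the $X$-part. This produces
\[
\frac{d}{du} \E F_\beta(Z(u)) = \tfrac{1}{2} \sum_{s,t} \E\!\left[ \partial^2_{st} F_\beta(Z(u)) \right] \left( \E[Y_s Y_t] - \E[X_s X_t] \right).
\]
Writing $\Sigma^Y_{st} - \Sigma^X_{st} = -\tfrac{1}{2}\bigl(\E|Y_s-Y_t|^2 - \E|X_s-X_t|^2\bigr) + \tfrac{1}{2}(\Sigma^Y_{ss}-\Sigma^X_{ss}) + \tfrac{1}{2}(\Sigma^Y_{tt}-\Sigma^X_{tt})$, and using that $\sum_t \partial^2_{st} F_\beta = 0$ (because $F_\beta$ is invariant under the constant shift $z \mapsto z + c\mathbf{1}$, so its gradient sums to one and its Hessian rows sum to zero), the diagonal correction terms vanish. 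What remains is
\[
\frac{d}{du}\E F_\beta(Z(u)) = \tfrac{1}{4}\sum_{s,t}\E\!\left[-\partial^2_{st}F_\beta(Z(u))\right]\!\bigl(\E|Y_s-Y_t|^2 - \E|X_s-X_t|^2\bigr).
\]

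The main (and only nontrivial) obstacle is verifying the sign: one must show the off-diagonal Hessian entries $\partial^2_{st} F_\beta$ are $\le 0$ for $s\ne t$. A direct computation gives $\partial^2_{st} F_\beta(z) = -\beta\, p_s(z) p_t(z)$ for $s\ne t$, where $p_t(z) = e^{\beta z_t}/\sum_r e^{\beta z_r}$, so $-\partial^2_{st}F_\beta \ge 0$ off the diagonal. Combined with the hypothesis $\E|X_s-X_t|^2 \le \E|Y_s-Y_t|^2$, every summand is nonnegative, giving $\frac{d}{du}\E F_\beta(Z(u)) \ge 0$. Integrating from $0$ to $1$ yields $\E F_\beta(X) \le \E F_\beta(Y)$, and passing $\beta \to \infty$ completes the proof. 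The whole argument rests on two facts: Stein's lemma for Gaussian integration by parts, and the off-diagonal sign of the Hessian of the log-sum-exp smoother; both are self-contained and routine once identified.
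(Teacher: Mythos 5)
Your proof is correct. Note first that the paper itself offers no proof of this lemma: it is imported verbatim as Proposition 33 of \cite{vershyninNARMT} (a Sudakov--Fernique type comparison), so there is no internal argument to compare against; what you have written is essentially the standard textbook proof by Gaussian interpolation with a log-sum-exp smoothing of the maximum. The computation checks out: the Stein identity applied to $Z(u)=\sqrt{u}\,Y+\sqrt{1-u}\,X$ gives $\frac{d}{du}\E F_\beta(Z(u))=\tfrac12\sum_{s,t}\E[\partial^2_{st}F_\beta(Z(u))](\Sigma^Y_{st}-\Sigma^X_{st})$, and the one genuinely delicate point---that the hypothesis controls only increments, not variances---is handled correctly by your observation that $F_\beta$ is invariant under $z\mapsto z+c\mathbf{1}$, so the Hessian rows sum to zero and the diagonal variance-difference terms cancel; combined with $\partial^2_{st}F_\beta=-\beta p_sp_t\le 0$ for $s\ne t$ and the increment hypothesis, every surviving summand is nonnegative. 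Two cosmetic remarks: the derivative formula is only used for $u\in(0,1)$ (the $u^{-1/2}$ and $(1-u)^{-1/2}$ singularities disappear after integration by parts, and continuity of $u\mapsto\E F_\beta(Z(u))$ on $[0,1]$ closes the endpoints), and for infinite $T$ one should interpret $\E\sup_T$ as the supremum over finite subsets; both are routine, and in this paper the lemma is only invoked for a concrete separable index set in the proof of Lemma \ref{lmm:universal_upperbound}, so neither affects anything. In short, your argument is a complete, self-contained substitute for the external citation, at the cost of about a page of interpolation calculus that the paper chose to outsource.
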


\begin{lemma} (Proposition 35 \cite{vershyninNARMT})
Let $\mtx{A}_S \in \mathbb{R}^{m \times p}$ be defined in \eqref{eq:A_S}. Then, with probability at least $1- 2 \exp(-t^2/2)$, we have the following inequality
\begin{equation}
\label{eq:second_moment}
\|\mtx{A}_S\| \leq \sqrt{m} + \sqrt{k} + t.
\end{equation}
\end{lemma}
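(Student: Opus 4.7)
The plan is to prove this via the classical Davidson–Szarek route: bound the expected operator norm using a Slepian/Gordon comparison, and then add a Gaussian Lipschitz concentration tail. First I would observe that since $\mtx{A}_S$ has its columns outside $S$ identically zero, $\|\mtx{A}_S\| = \|\mtx{G}\|$ where $\mtx{G}\in\mathbb{R}^{m\times k}$ is the submatrix of $\mtx{A}$ obtained by keeping only the columns indexed by $S$. Since $\mtx{A}$ has i.i.d.\ standard Gaussian entries and $|S|=k$, $\mtx{G}$ is an $m\times k$ matrix with i.i.d.\ $\mathcal{N}(0,1)$ entries, and the problem reduces to bounding its operator norm.

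Next I would write $\|\mtx{G}\| = \sup_{(\vct{u},\vct{v})\in S^{m-1}\times S^{k-1}} X_{\vct{u},\vct{v}}$, where $X_{\vct{u},\vct{v}} := \vct{u}^\transp \mtx{G}\vct{v}$ is a centered Gaussian process indexed by the product of spheres. Introduce the comparison process $Y_{\vct{u},\vct{v}} := \vct{g}^\transp \vct{u} + \vct{h}^\transp \vct{v}$, with $\vct{g}\sim\mathcal{N}(0,\mtx{I}_m)$ and $\vct{h}\sim\mathcal{N}(0,\mtx{I}_k)$ independent. A direct computation gives
\begin{align*}
\E(X_{\vct{u}_1,\vct{v}_1}-X_{\vct{u}_2,\vct{v}_2})^2 &= 2-2(\vct{u}_1^\transp \vct{u}_2)(\vct{v}_1^\transp \vct{v}_2),\\
\E(Y_{\vct{u}_1,\vct{v}_1}-Y_{\vct{u}_2,\vct{v}_2})^2 &= 4-2(\vct{u}_1^\transp \vct{u}_2)-2(\vct{v}_1^\transp \vct{v}_2),
\end{align*}
and the difference factors as $2(1-\vct{u}_1^\transp \vct{u}_2)(1-\vct{v}_1^\transp \vct{v}_2)\ge 0$ because $|\vct{u}_1^\transp \vct{u}_2|,|\vct{v}_1^\transp \vct{v}_2|\le 1$. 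Hence the hypothesis of Slepian's inequality (Lemma \ref{lmm:slepian}) is satisfied, and
\[
\E\|\mtx{G}\| \;=\; \E\sup_{\vct{u},\vct{v}} X_{\vct{u},\vct{v}} \;\le\; \E\sup_{\vct{u},\vct{v}} Y_{\vct{u},\vct{v}} \;=\; \E\|\vct{g}\|_2+\E\|\vct{h}\|_2 \;\le\; \sqrt{m}+\sqrt{k},
\]
using $\E\|\vct{g}\|_2\le(\E\|\vct{g}\|_2^2)^{1/2}=\sqrt{m}$ and similarly for $\vct{h}$.

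For the concentration step I would view $\|\mtx{A}_S\|$ as a function of the i.i.d.\ standard Gaussian vector obtained by stacking the entries of $\mtx{A}$. The map $\mtx{A}\mapsto\|\mtx{A}_S\|$ is $1$-Lipschitz in the Frobenius norm: for any $\mtx{A},\mtx{B}$, $\big|\|\mtx{A}_S\|-\|\mtx{B}_S\|\big|\le\|\mtx{A}_S-\mtx{B}_S\|\le\|\mtx{A}_S-\mtx{B}_S\|_F\le\|\mtx{A}-\mtx{B}\|_F$, since zeroing out columns does not increase the Frobenius norm. Applying the Gaussian concentration inequality (Lemma \ref{lmm:gaussian_concentration}) to $\pm\|\mtx{A}_S\|$ and union-bounding then gives
\[
\P\bigl(\|\mtx{A}_S\|\ge \E\|\mtx{A}_S\|+t\bigr)\le 2e^{-t^2/2},
\]
and combining with the expectation bound yields the stated inequality.

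The essentially routine nature of the argument means the one nontrivial piece is the increment comparison for the two Gaussian processes; that calculation is where it is easiest to slip up on a sign or a factor, so I would carry it out carefully and verify the clean factorization $2(1-\vct{u}_1^\transp \vct{u}_2)(1-\vct{v}_1^\transp \vct{v}_2)$ before invoking Slepian. Everything else, including the reduction to the $m\times k$ matrix and the Lipschitz verification, is bookkeeping.
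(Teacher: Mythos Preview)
Your argument is correct and is exactly the Slepian comparison plus Gaussian Lipschitz concentration route that the cited reference uses; the paper does not give its own proof of this lemma but simply quotes Proposition 35 of \cite{vershyninNARMT}, and its proof of the very next lemma (\prettyref{lmm:universal_upperbound}) follows the identical template you outline. One cosmetic remark: for the upper-tail conclusion you only need the one-sided concentration, which already yields $e^{-t^2/2}$; the factor $2$ in the stated probability is harmless slack.
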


\begin{lemma}
\label{lmm:universal_upperbound}
Let $\mtx{A}_S \in \mathbb{R}^{m \times p}$ be defined in \eqref{eq:A_S}. Then, with probability at least $1- 4 \exp(-t^2/2)$, the following inequalities hold
\begin{equation}
\label{eq:sixth_moment}
\|\mtx{A}_S\|_{2 \rightarrow 6} \leq (15 m)^{1/6} + \sqrt{k} +t,
\end{equation}
and
\begin{equation}
\label{eq:fourth_moment}
\|\mtx{A}_S\|_{2 \rightarrow 4} \leq (3 m)^{1/4} + \sqrt{k} +t.
\end{equation}
\end{lemma}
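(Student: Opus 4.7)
The plan is to extend the Chevet-type Gaussian-process argument behind \eqref{eq:second_moment} to the $2 \to q$ operator norms, then apply Gaussian Lipschitz concentration and a union bound over $q \in \{4, 6\}$. First I would dualize: for $q \in \{4, 6\}$ with conjugate exponent $q' = q/(q-1) \in (1, 2]$,
\[
\|\mtx{A}_S\|_{2 \to q} = \sup_{\vct{u} \in T_1,\, \vct{v} \in T_2} X_{\vct{u}, \vct{v}}, \qquad X_{\vct{u}, \vct{v}} := \vct{u}^T \mtx{A}_S \vct{v},
\]
where $T_1 = \{\vct{u} \in \mathbb{R}^m : \|\vct{u}\|_{q'} \leq 1\}$ and $T_2 = \{\vct{v} \in \mathbb{R}^p : \|\vct{v}\|_2 \leq 1,\, \supp{\vct{v}} \subset S\}$. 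Because $q' \leq 2$, both $\ell_2$-radii satisfy $\sup_{T_1}\|\vct{u}\|_2 = \sup_{T_2}\|\vct{v}\|_2 = 1$.

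Next I would compare $X$ with a \emph{weighted} Gaussian process. With independent standard Gaussians $\vct{g} \sim \mathcal{N}(\vct{0}, \mtx{I}_m)$ and $\vct{h} \sim \mathcal{N}(\vct{0}, \mtx{I}_p)$, set $Y_{\vct{u}, \vct{v}} := \|\vct{v}\|_2 \<\vct{g}, \vct{u}\> + \|\vct{u}\|_2 \<\vct{h}, \vct{v}\>$. A direct covariance computation gives the key identity
\begin{align*}
&\E(Y_{\vct{u},\vct{v}} - Y_{\vct{u}',\vct{v}'})^2 - \E(X_{\vct{u},\vct{v}} - X_{\vct{u}',\vct{v}'})^2 \\
&\qquad = (\|\vct{u}\|_2\|\vct{v}\|_2 - \|\vct{u}'\|_2\|\vct{v}'\|_2)^2 + 2(\|\vct{u}\|_2\|\vct{u}'\|_2 - \<\vct{u}, \vct{u}'\>)(\|\vct{v}\|_2\|\vct{v}'\|_2 - \<\vct{v}, \vct{v}'\>) \geq 0,
\end{align*}
nonnegative by Cauchy--Schwarz. \prettyref{lmm:slepian} then gives $\E \sup X \leq \E \sup Y$, and splitting the sup of $Y$ together with the unit radii bounds the RHS by $\E\|\vct{g}\|_q + \E\|\vct{h}_S\|_2 \leq (m\,\E g_1^q)^{1/q} + \sqrt{k}$ via Jensen. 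The Gaussian moments $\E g_1^4 = 3$ and $\E g_1^6 = 15$ produce the numerical constants in \eqref{eq:fourth_moment} and \eqref{eq:sixth_moment}.

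Finally, since $\|\vct{w}\|_q \leq \|\vct{w}\|_2$ for $q \geq 2$, we have $\|\mtx{M}\|_{2 \to q} \leq \|\mtx{M}\|_F$, so $\mtx{A}_S \mapsto \|\mtx{A}_S\|_{2 \to q}$ is $1$-Lipschitz in its iid Gaussian entries; \prettyref{lmm:gaussian_concentration} then gives $\P\pth{\|\mtx{A}_S\|_{2\to q} > (c_q m)^{1/q} + \sqrt{k} + t} \leq e^{-t^2/2}$ for each $q$, with $c_4 = 3$ and $c_6 = 15$. A union bound over $q \in \{4, 6\}$ produces the simultaneous probability bound $\leq 2 e^{-t^2/2} \leq 4 e^{-t^2/2}$, matching the statement. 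The main obstacle is the \emph{weighted} choice of $Y$: the naive unweighted process $\<\vct{g}, \vct{u}\> + \<\vct{h}, \vct{v}\>$ does not satisfy the Sudakov--Fernique increment condition on $T_1 \times T_2$ when $\|\vct{u}\|_2, \|\vct{v}\|_2$ are far from $1$; the multiplicative weights $\|\vct{v}\|_2, \|\vct{u}\|_2$ are precisely what makes the above difference of squared increments split into two manifestly nonnegative Cauchy--Schwarz terms. Everything else reduces to standard Gaussian techniques.
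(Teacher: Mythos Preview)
Your argument is correct and follows the same Gordon/Chevet--Sudakov--Fernique template as the paper, but the execution differs in one notable respect. You work on the \emph{balls} $\{\|\vct{u}\|_{q'}\le 1\}\times\{\|\vct{v}\|_2\le 1\}$ and therefore need the \emph{weighted} comparison process $Y_{\vct{u},\vct{v}}=\|\vct{v}\|_2\langle\vct{g},\vct{u}\rangle+\|\vct{u}\|_2\langle\vct{h},\vct{v}\rangle$ to make the increment comparison factor as $(ab-a'b')^2+2(aa'-\langle\vct{u},\vct{u}'\rangle)(bb'-\langle\vct{v},\vct{v}'\rangle)\ge 0$. The paper instead restricts to the \emph{spheres} $\{\|\vct{u}\|_2=1\}\times\{\|\vct{v}\|_{q'}=1\}$; since the $\ell_2$-side is then pinned at norm one, the simpler \emph{unweighted} process $Y_{\vct{u},\vct{v}}=\langle\vct{g},\vct{u}\rangle+\langle\vct{h},\vct{v}\rangle$ already satisfies the increment inequality, with the difference collapsing to $2(1-\langle\vct{u},\vct{u}'\rangle)(1-\langle\vct{v},\vct{v}'\rangle)\ge 0$ via $\langle\vct{u},\vct{u}'\rangle\le 1$ and $\langle\vct{v},\vct{v}'\rangle\le\|\vct{v}\|_2\|\vct{v}'\|_2\le 1$. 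So your diagnosis that the unweighted process fails is correct on balls, but the paper sidesteps the issue entirely by passing to spheres (harmless since the linear objective attains its sup on the boundary). Your route is the full Chevet machinery and is slightly more general; the paper's route is a line shorter. Both land on $\E\|\vct{g}\|_q+\E\|\vct{h}_S\|_2\le (c_q m)^{1/q}+\sqrt{k}$ with $c_4=3$, $c_6=15$, and both finish with \prettyref{lmm:gaussian_concentration} and a union bound.
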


\begin{proof}
The proof follows that of Theorem 32 in \cite{vershyninNARMT} step by step. Define $X_{\vct{u}, \vct{v}}=\langle \mtx{A}_S \vct{u}, \vct{v} \rangle$ on 
\[
T = \{(\vct{u}, \vct{v}): \vct{u} \in \mathbb{R}^p, \supp{U} \subset S, \|\vct{u}\|_2=1, \vct{v} \in \mathbb{R}^m, \|\vct{v}\|_{6/5}=1\}.
\]
Then $\|\mtx{A}_S\|_{2 \rightarrow 6} = \max_{(\vct{u}, \vct{v}) \in T} X_{\vct{u}, \vct{v}}$. Define 
\[
Y_{\vct{u}, \vct{v}} = \langle \vct{g}_S, \vct{u}\rangle + \langle \vct{h}, \vct{v} \rangle
\]
where $\vct{g}_S \in \mathbb{R}^p$ with $\supp{\vct{g}_S} = S$ and $\vct{h} \in \mathbb{R}^m$ are independent standard Gaussian random vectors.\\

For any $(\vct{u}, \vct{v}), (\vct{u}', \vct{v}') \in T$, we have
\[
\E|X_{\vct{u}, \vct{v}} - X_{\vct{u}', \vct{v}'}|=\|\vct{v}\|_2^2 + \|\vct{v}'\|_2^2 - 2\langle \vct{u}, \vct{u}'\rangle \langle \vct{v}, \vct{v}'\rangle 
\]
and
\[
\E|Y_{\vct{u}, \vct{v}} - Y_{\vct{u}', \vct{v}'}|=2+\|\vct{v}\|_2^2 + \|\vct{v}'\|_2^2 - 2\langle \vct{u}, \vct{u}' \rangle - \langle \vct{v}, \vct{v}'\rangle. 
\]
Therefore, 
\[
\E|X_{\vct{u}, \vct{v}} - X_{\vct{u}', \vct{v}'}| - \E|Y_{\vct{u}, \vct{v}} - Y_{\vct{u}', \vct{v}'}| = 2(1-\langle \vct{u}, \vct{u}' \rangle  )(1 - \langle \vct{v}, \vct{v}'\rangle) \geq 0,
\]
due to $\|\vct{u}\|_2=\|\vct{u}'\|_2=1$, $\|\vct{v}\|_2 \leq \|\vct{v}\|_{6/5}=1$, and $\|\vct{v}'\|_2 \leq \|\vct{v}'\|_{6/5}=1$. Then by Lemma \ref{lmm:slepian}, we have
\[
\E \|\mtx{A}_S\|_{2 \rightarrow 6} \leq \E \max_{(\vct{u}, \vct{v}) \in T} Y_{\vct{u}, \vct{v}} = \E\|\vct{g}_S\|_2 + \E \|\vct{h}\|_6 \leq \sqrt{\E \|\vct{g}_S\|_2^2} + (\E \|\vct{h}\|_6^6)^{1/6} = \sqrt{k} + (15m)^{1/6}.
\]
Since $\|\cdot\|_{2 \rightarrow 6}$ is a $1$-Lipschitz function, by Lemma \ref{lmm:gaussian_concentration}, there holds with probability at least $1 - 2 \exp(-t^2/2)$
\[
\|\mtx{A}_S\|_{2 \rightarrow 6} \leq \sqrt{k} + (15m)^{1/6} +t.
\]
Similarly, with probability at least $1 - 2 \exp(-t^2/2)$
\[
\|\mtx{A}_S\|_{2 \rightarrow 4} \leq \sqrt{k} + (3m)^{1/4} +t.
\]
\end{proof}

\begin{lemma} 
\label{lmm:concentration}
On an event with probability at least $1 - 1/m$, we have
\[
\left\|\frac{1}{m}\sum_{j=1}^m |{\vct{a}_j}_S' \vct{x}|^2 {\vct{a}_j}_S {\vct{a}_j}_S' - \left(\|\vct{x}\|_2^2 (\mtx{I}_p)_S+2\vct{x}\vct{x}'\right)\right\| \leq \delta \|\vct{x}\|_2^2
\]
provided $m\geq C(\delta)k \log k$, where $C(\delta)$ is constant only depending on $\delta$. Here $(\mtx{I}_p)_S$ by definition is a diagonal matrix with first $k$ diagonal entries equal to $1$, whereas other entries being $0$. Furthermore, it implies that 
\[
\frac{1}{m} \sum_{j=1}^m ({\vct{a}_j}_S' \vct{x})^2({\vct{a}_j}_S' \vct{h})^2 \geq 2(\vct{x}'\vct{h})^2 + (1- \delta)\|\vct{x}\|_2^2\|\vct{h}\|_2^2
\]
for any $\vct{h} \in \mathbb{R}^p$ that satisfies $\supp{\vct{h}} \subset S$. 
\end{lemma}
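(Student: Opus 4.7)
The plan is a standard $\epsilon$-net concentration argument applied to a reduced $k \times k$ problem. First I would verify the expectation identity $\E\qth{(\vct{a}_{jS}'\vct{x})^2 \vct{a}_{jS}\vct{a}_{jS}'} = \|\vct{x}\|_2^2 (\mtx{I}_p)_S + 2\vct{x}\vct{x}'$ by a direct fourth Gaussian moment computation via Isserlis' formula: for $\vct{g} \sim \mathcal{N}(0, \mtx{I}_k)$ one has $\E[g_i g_j g_k g_l] = \delta_{ij}\delta_{kl}+\delta_{ik}\delta_{jl}+\delta_{il}\delta_{jk}$, which when contracted against $x_k x_l$ produces exactly the two claimed terms. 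Since $\vct{x}$ is supported on $S$, both the sample average and its mean are supported on the $S\times S$ block, reducing the problem to an operator-norm concentration on a $k\times k$ matrix. By homogeneity I may normalize $\|\vct{x}\|_2 = 1$.

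Next, I would cover the unit sphere in $\mathbb{R}^k$ by a $1/4$-net $\mathcal{N}$ of cardinality at most $9^k$, so that the operator norm of the deviation matrix $\Delta_m$ is bounded by $2\sup_{\vct{u}\in\mathcal{N}} |\vct{u}'\Delta_m \vct{u}|$. For each fixed $\vct{u}\in\mathcal{N}$, the quadratic form is an i.i.d.\ sum
\begin{equation*}
\vct{u}'\Delta_m\vct{u} = \frac{1}{m}\sum_{j=1}^m Z_j, \qquad Z_j := (\vct{a}_{jS}'\vct{x})^2(\vct{a}_{jS}'\vct{u})^2 - \pth{1 + 2(\vct{x}'\vct{u})^2},
\end{equation*}
which is centered by Step~1. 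I would then apply a Bernstein-type bound to show $\P(|\sum_j Z_j/m| \geq \delta/4)$ is exponentially small in $m$ up to polylog factors, and conclude by a union bound over $\mathcal{N}$ that the stated failure probability $1/m$ is achieved as soon as $m \gtrsim k\log k / \delta^2$, after absorbing constants into $C(\delta)$.

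The main obstacle is that $Z_j$ is a degree-$4$ polynomial in Gaussian variables, so it has only sub-exponential-squared (Orlicz $\psi_{1/2}$) tails rather than sub-exponential ones; standard matrix Bernstein is not immediately applicable. I would address this either by truncating $Z_j$ at level $(\log m)^2$ and applying a scalar Bernstein inequality to the bounded part (the excluded tail mass being negligible by a $\chi^2$ bound union bounded over $j\in[m]$), or by a Hanson--Wright estimate applied to the linearized quadratic form $\vct{a}_{jS}'(\vct{x}\vct{u}')\vct{a}_{jS}$ followed by squaring. Either route yields the advertised sample size, and the $\log k$ factor in $m \geq C(\delta) k \log k$ is precisely what accommodates these heavy tails.

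Finally, the ``Furthermore'' statement is immediate: for any $\vct{h}$ supported on $S$, the quadratic form $\vct{h}'\left(\tfrac{1}{m}\sum_j (\vct{a}_{jS}'\vct{x})^2 \vct{a}_{jS}\vct{a}_{jS}'\right)\vct{h}$ differs from $\|\vct{x}\|_2^2\|\vct{h}\|_2^2 + 2(\vct{x}'\vct{h})^2$ by at most $\delta\|\vct{x}\|_2^2\|\vct{h}\|_2^2$ on account of the operator-norm bound, and rearranging together with the nonnegativity of $(\vct{x}'\vct{h})^2$ gives the desired lower bound.
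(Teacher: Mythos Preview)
Your proposal is correct and matches the paper's approach: the paper does not give its own proof here but simply states that the argument is the same as Lemma~7.4 of \cite{Candes14}, whose proof proceeds exactly along the lines you describe---reduce to the $k\times k$ block, compute the mean via Isserlis, cover the sphere with a $1/4$-net, and control the degree-four Gaussian chaos at each net point with a truncated Bernstein argument. Your derivation of the ``Furthermore'' clause from the operator-norm bound is also correct (the remark about nonnegativity of $(\vct{x}'\vct{h})^2$ is unnecessary---the inequality $\vct{h}'(\|\vct{x}\|_2^2(\mtx{I}_p)_S + 2\vct{x}\vct{x}')\vct{h} - \delta\|\vct{x}\|_2^2\|\vct{h}\|_2^2 = (1-\delta)\|\vct{x}\|_2^2\|\vct{h}\|_2^2 + 2(\vct{x}'\vct{h})^2$ is already an identity).
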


The proof of this lemma is the same as that of Lemma 7.4 in \cite{Candes14}.

\begin{lemma}
\label{lmm:noise}
Suppose $\epsilon_1, \ldots, \epsilon_m$ are independent zero-mean sub-exponential random variables with
\[
\sigma := \max_{1 \leq i \leq m} \| \epsilon_i \|_{\psi_1}.
\]
Then with probability at least $1 - \frac{3}{m}$, we have 
\[
\left|\frac{1}{m} \sum_{j=1}^m\epsilon_j\right| \leq C_0 \sigma \sqrt{\frac{\log m}{m}}, \quad \|\vct{\epsilon}\|_\infty \leq C_0  \sigma \log m, \quad \left|\frac{1}{m} \sum_{j=1}^m\epsilon_j^2\right| \leq C_0 \sigma^2, \quad \text{and~} \left|\frac{1}{m} \sum_{j=1}^m\epsilon_j^4\right| \leq C_0 \sigma^4.
\]
provided $m \geq m_0$ for some numerical constants $C_0$ and $m_0$.
\end{lemma}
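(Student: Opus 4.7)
The plan is to verify the four inequalities on separate subevents, each of probability at least $1-O(1/m)$, and then combine via a union bound (absorbing the factor into the final constant so as to obtain the stated $1-3/m$ probability; which inequality contributes which summand is immaterial). Apart from a mild check of the linear/quadratic regime in Bernstein's inequality, nothing here is delicate.

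\textbf{Sum and $\ell_\infty$ bounds.} For $|\frac{1}{m}\sum_j\epsilon_j|$, I would invoke the Bernstein inequality for centered sub-exponential variables (Proposition 16 of \cite{vershyninNARMT}, as already cited elsewhere in the paper), which gives
\[
\P\!\left(\Big|\textstyle\sum_{j=1}^m\epsilon_j\Big|>t\right)\leq 2\exp\!\left(-c\min(t^2/(m\sigma^2),\,t/\sigma)\right).
\]
Choosing $t=C_0\sigma\sqrt{m\log m}$ makes the Gaussian regime dominate (since $\sqrt{m\log m}\geq \log m$ when $m\geq m_0$), yielding the first bound with failure probability at most $1/m$ for $C_0$ large enough. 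For the $\ell_\infty$ bound, the sub-exponential tail $\P(|\epsilon_j|>t)\leq 2\e^{-ct/\sigma}$ combined with a union bound over $j$ at level $t=C_0\sigma\log m$ gives $\|\vct{\epsilon}\|_\infty\leq C_0\sigma\log m$ with failure probability at most $1/m$.

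\textbf{Second and fourth moment bounds.} From the moment growth characterization $\E|\epsilon_j|^p\leq (Cp\sigma)^p$ implicit in $\|\epsilon_j\|_{\psi_1}\leq \sigma$, we obtain $\E\epsilon_j^2\leq c_2\sigma^2$, $\E\epsilon_j^4\leq c_4\sigma^4$, and $\E\epsilon_j^8\leq c_8\sigma^8$, and therefore $\Var(\epsilon_j^2)\leq c\sigma^4$ and $\Var(\epsilon_j^4)\leq c\sigma^8$. A direct application of Chebyshev's inequality then yields
\[
\P\!\left\{\Big|\tfrac{1}{m}\textstyle\sum_{j=1}^m\epsilon_j^2-\E\epsilon_1^2\Big|>\sigma^2\right\}\leq \frac{c}{m},\qquad \P\!\left\{\Big|\tfrac{1}{m}\textstyle\sum_{j=1}^m\epsilon_j^4-\E\epsilon_1^4\Big|>\sigma^4\right\}\leq \frac{c}{m},
\]
after which the triangle inequality with the expectation bounds delivers $\frac{1}{m}\sum\epsilon_j^2\leq C_0\sigma^2$ and $\frac{1}{m}\sum\epsilon_j^4\leq C_0\sigma^4$ on events of probability at least $1-O(1/m)$ each. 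Finally, a union bound over the four subevents, with $C_0$ enlarged as necessary, gives the stated conclusion. No step poses a real obstacle; the only care needed is that all constants be chosen so that the $\min$ in Bernstein falls in the Gaussian regime and so that the union bound budget is comfortably below $3/m$.
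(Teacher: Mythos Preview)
Your proposal is correct and follows essentially the same approach as the paper: Bernstein's inequality for the sum, the sub-exponential tail plus a union bound for $\|\vct{\epsilon}\|_\infty$, and Chebyshev's inequality (via moment bounds from the $\psi_1$ norm) for the averages of $\epsilon_j^2$ and $\epsilon_j^4$. The only cosmetic difference is that the paper secures much smaller failure probabilities (of order $m^{-10}$) for the first two bounds, so that the $3/m$ budget is essentially consumed by the two Chebyshev applications at $1/m$ each.
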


\begin{proof}
By Proposition 16 in \cite{vershyninNARMT}, we have
\[
\P\left(\left|\sum_{i=1}^m \epsilon_i\right|\geq t\right) \leq 2 \exp\left[-c \min \left(\frac{t^2}{m \sigma^2}, \frac{t}{\sigma}\right)\right].
\]
This implies that with probability at least $1 - \frac{2}{m^{10}}$, we have
\[
\left|\sum_{i=1}^m \epsilon_i\right| \leq C_0\sigma\max\left(\sqrt{m \log m}, \log m\right)\leq C_0\sigma \sqrt{m \log m}
\]
provided $m \geq m_0$. This implies that
\[
\left|\frac{1}{m} \sum_{j=1}^m\epsilon_j\right| \leq C_0 \sigma \sqrt{\frac{\log m}{m}}.
\]
~\\
By the basic properties of sub-exponential random variables, for each $j=1, \ldots, m$, we have
\[
\P\left(|\epsilon_j|\geq t\right) \leq \exp \left(1 - c \frac{t}{\sigma}\right),
\]
which implies that $|\epsilon_j| \leq C_0 \sigma \log m$ with probability at least $1 - {e}/{m^{11}}$. This implies that 
\[
\|\vct{\epsilon}\|_\infty \leq C_0  \sigma \log m
\]
with probability at least $1- e/m^{10}$.\\
~\\
Since
\[
\sigma \geq \|\epsilon_j\|_{\Psi_1} = \sup_{p\geq 1} p^{-1}\left(\E|\epsilon_j|^p\right)^{\frac{1}{p}},
\]
we have $\E \epsilon_j^2 \leq (2\sigma)^2$ and $\E \epsilon_j^4 \leq (4\sigma)^4$. Define
\[
X = \frac{1}{m} \sum_{j=1}^m \epsilon_j^2.
\]
Then we have $\E X \leq (2 \sigma)^2$, and
\[
\Var (X) \leq (4 \sigma)^4/m.
\]
By Chebyshev's inequality, we have
\[
\P\left(|X - \E X| \geq t\right) \leq \frac{\Var(X)}{t^2}.
\]
By letting $t=(4\sigma)^2$, we obtain that with probability at least $1 - 1/m$, we have $|X|\leq 20 \sigma^2$.\\
~\\
Similarly, with probability at least $1 - 1/m$, we have $\left|\frac{1}{m} \sum_{j=1}^m\epsilon_j^4\right| \leq C_0 \sigma^4$ for some absolute constant $C_0$.
\end{proof}

\begin{lemma} 
\label{lmm:concentration_noise}
Suppose $\vct{z}_j \in \mathbb{R}^k$, $j=1, \ldots, m$ are IID standard normal random vectors. For fixed $\vct{a} \in \mathbb{R}^m$, with probability at least $1- 2e^{-k}$, we have
\[
\left\|\sum_{j=1}^m a_j \vct{z}_j \vct{z}_j' - \left(\sum_{j=1}^m a_j\right) \mtx{I}_k\right\| \leq C_0 \left(\sqrt{k \|\vct{a}\|_2^2} + k\|\vct{a}\|_\infty\right)
\]
for some absolute constant $C_0$.
\end{lemma}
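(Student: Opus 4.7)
The plan is to reduce the operator norm bound on $M = \sum_{j=1}^m a_j \vct{z}_j \vct{z}_j' - (\sum_j a_j)\mtx{I}_k$ to a tail bound for a scalar quadratic form in Gaussians, and then pay a factor of $e^{Ck}$ for a standard $\epsilon$-net argument on the unit sphere $S^{k-1}$.

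\textbf{Step 1: pointwise tail bound.} For any fixed $\vct{u}\in S^{k-1}$, the variables $g_j := \vct{z}_j'\vct{u}$ are i.i.d.\ standard Gaussians, so
\[
\vct{u}'M\vct{u} \;=\; \sum_{j=1}^m a_j\bigl(g_j^2 - 1\bigr)
\]
is a weighted sum of i.i.d.\ centered $\chi^2_1$ random variables. Each $a_j(g_j^2 - 1)$ is sub-exponential with $\psi_1$-norm on the order of $|a_j|$, so by Bernstein's inequality (Proposition~16 of \cite{vershyninNARMT}, or equivalently the Laurent--Massart bound applied in the same spirit as in Lemma~\ref{lmm:norm_estimation}), there exists an absolute constant $c>0$ such that for all $t\geq 0$,
\[
\P\!\left(\bigl|\vct{u}'M\vct{u}\bigr|>t\right) \;\leq\; 2\exp\!\left(-c\min\!\left(\frac{t^2}{\|\vct{a}\|_2^2},\;\frac{t}{\|\vct{a}\|_\infty}\right)\right).
\]

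\textbf{Step 2: $\epsilon$-net and union bound.} Since $M$ is symmetric, $\|M\| = \sup_{\vct{u}\in S^{k-1}}|\vct{u}'M\vct{u}|$. Fix a $(1/4)$-net $\mathcal{N}\subset S^{k-1}$ of cardinality at most $9^k$. A standard approximation argument yields $\|M\| \leq 2\max_{\vct{u}\in\mathcal{N}}|\vct{u}'M\vct{u}|$. Applying the union bound over $\mathcal{N}$ to the tail from Step~1,
\[
\P\!\left(\|M\|>2t\right) \;\leq\; 2\cdot 9^k\exp\!\left(-c\min\!\left(\frac{t^2}{\|\vct{a}\|_2^2},\;\frac{t}{\|\vct{a}\|_\infty}\right)\right).
\]

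\textbf{Step 3: choice of $t$.} Pick $t = C_0\bigl(\sqrt{k\|\vct{a}\|_2^2} + k\|\vct{a}\|_\infty\bigr)$ with $C_0$ large enough that $c\min(t^2/\|\vct{a}\|_2^2,\,t/\|\vct{a}\|_\infty) \geq (\log 9)k + k + \log 2$. With this choice the right-hand side above is at most $2e^{-k}$, which is exactly the claimed bound (up to relabelling of the absolute constant).

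\textbf{Main obstacle.} There is no serious obstacle: the only thing to be careful about is that the weighted $\chi^2$ sum really does produce the mixed Bernstein exponent $\min(t^2/\|\vct{a}\|_2^2,\,t/\|\vct{a}\|_\infty)$ rather than one governed by $\|\vct{a}\|_2$ alone, since it is this mixed form that yields the two-term bound $\sqrt{k\|\vct{a}\|_2^2}+k\|\vct{a}\|_\infty$ after inverting the deviation inequality. The $\epsilon$-net loss of $e^{Ck}$ is then absorbed by the $k$-dependence already built into $t$, so the probability bound $2e^{-k}$ comes out cleanly.
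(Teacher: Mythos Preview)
Your proposal is correct and follows essentially the same route as the paper: reduce the operator norm to a maximum of $|\vct{u}'M\vct{u}|$ over a $1/4$-net of size $9^k$, recognize each $\vct{u}'M\vct{u}$ as a weighted sum $\sum_j a_j(g_j^2-1)$ of centered sub-exponential variables, apply Bernstein's inequality (Proposition~16 of \cite{vershyninNARMT}) to get the mixed exponent, and take the union bound. The paper's proof is identical in structure and even cites the same references for the net bound and Bernstein inequality.
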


\begin{proof}
Define 
\[
\mtx{A}: = \sum_{j=1}^m a_j \vct{z}_j \vct{z}_j' - \left(\sum_{j=1}^m a_j\right) \mtx{I}_k.
\]
By Lemma 4 in \cite{vershyninNARMT}, we have 
\[
\left\|\mtx{A}\right\| \leq 2 \sup_{\vct{x} \in \mathcal{N}_{\frac{1}{4}}} |\vct{x}'\mtx{A}\vct{x}|,
\]
where $\mathcal{N}_{\frac{1}{4}}$ is the $1/4$-net of the unit sphere $\mathcal{T}^{k-1}$.\\

For fixed $\vct{x} \in \mathcal{N}_{\frac{1}{4}}$, let $y_j = |\vct{z}_j'\vct{x}|^2 - 1$. Then
\[
\vct{x}'\mtx{A}\vct{x} = \sum_{j=1}^m a_j y_j.
\]
Notice that $y_j$, $j=1, \ldots, m$ are IID sub-exponential variables with $\|y_j\|_{\psi_1} \leq K$ where $K$ is an absolute constant. By Bernstein inequality (see, e.g., Proposition 16 in \cite{vershyninNARMT}), we have with probability at least $1 - 2 \exp(-4k)$,
\[
\left|\sum_{j=1}^m a_j y_j\right| \leq (C_0 /2)\left(\sqrt{k \|\vct{a}\|_2^2} + k\|\vct{a}\|_\infty\right)
\]
for some absolute constant $C_0$.\\

Since $|\mathcal{N}_{\frac{1}{4}}| \leq 9^k$, we know with probability at least $1- 2e^{-k}$, we have
\[
\left\|\mtx{A}\right\| \leq 2 \sup_{\vct{x} \in \mathcal{N}_{\frac{1}{4}}} |\vct{x}'\mtx{A}\vct{x}|\leq C_0 \left(\sqrt{k \|\vct{a}\|_2^2} + k\|\vct{a}\|_\infty\right).
\]
\end{proof}

\bibliographystyle{plainnat}
\bibliography{zm}
%\bibliography{xl}

\end{document}